\newcommand{\prtid}{}
\def\sectionmark#1{} 
\def\subsectionmark#1{}
\newcommand{\sectnr}{\ifnum \c@secnumdepth >\z@
                 \thesection.\hskip 1em\relax \fi}
\def\@evenhead{\footnotesize\rm\thepage\hfil\leftmark\hfil\llap{\prtid}}
\def\@oddhead{\footnotesize\rm\rlap{\prtid}\hfil\rightmark\hfil\thepage}
\def\tableofcontents{\section*{Contents} 
 \@starttoc{toc}}
\def\@biblabel#1{#1.}
\let\Thebibliography=\thebibliography
\renewcommand{\thebibliography}[1]{\def\@mkboth##1##2{}\Thebibliography{#1}
\addcontentsline{toc}{section}{References}
\frenchspacing 
\setlength{\@topsep}{0pt}
\setlength{\itemsep}{0pt}%
\setlength{\parskip}{0pt plus 2pt}%
}
\def\mdots@{\mathinner.\nonscript\!.%
 \ifx\next,.\else\ifx\next;.\else\ifx\next..\else
 \nonscript\!\mathinner.\fi\fi\fi}
\let\ldots\mdots@
\let\cdots\mdots@
\let\dotso\mdots@
\let\dotsb\mdots@
\let\dotsm\mdots@
\let\dotsc\mdots@
\def\vdots{\vbox{\baselineskip2.8\p@ \lineskiplimit\z@
    \kern6\p@\hbox{.}\hbox{.}\hbox{.}\kern3\p@}}
\def\ddots{\mathinner{\mkern1mu\raise8.6\p@\vbox{\kern7\p@\hbox{.}}%
    \raise5.8\p@\hbox{.}\raise3\p@\hbox{.}\mkern1mu}}
\def\@seccntformat#1{\csname the#1\endcsname.\quad}
\long\def\@makecaption#1#2{%
  \vskip\abovecaptionskip
  \sbox\@tempboxa{ #1. #2}%
  \ifdim \wd\@tempboxa >\hsize
    #1. #2\par
  \else
    \global \@minipagefalse
    \hb@xt@\hsize{\hfil\box\@tempboxa\hfil}%
  \fi
  \vskip\belowcaptionskip}
\renewcommand\section{\@startsection {section}{1}{\z@}%
                                   {-3.5ex \@plus -1ex \@minus -.2ex}%
                                   {2.3ex \@plus.2ex}%
                                   {\normalfont\Large\bfseries\boldmath}}
\renewcommand\subsection{\@startsection{subsection}{2}{\z@}%
                                     {-3.25ex\@plus -1ex \@minus -.2ex}%
                                     {1.5ex \@plus .2ex}%
                                     {\normalfont\large\bfseries\boldmath}}
\renewcommand\subsubsection{\@startsection{subsubsection}{3}{\z@}%
                                     {-3.25ex\@plus -1ex \@minus -.2ex}%
                                     {1.5ex \@plus .2ex}%
                                     {\normalfont\normalsize\bfseries\boldmath}}
\renewcommand\paragraph{\@startsection{paragraph}{4}{\z@}%
                                    {3.25ex \@plus1ex \@minus.2ex}%
                                    {-1em}%
                                    {\normalfont\normalsize\bfseries\boldmath}}
\renewcommand\subparagraph{\@startsection{subparagraph}{5}{\parindent}%
                                       {3.25ex \@plus1ex \@minus .2ex}%
                                       {-1em}%
                                      {\normalfont\normalsize\bfseries\boldmath}}
\newcommand{\authortitle}[2]{\author{#1}\title{#2}\markboth{#1}{#2}}
\newcommand{\auth}[2]{{#1, #2.}}
\newcommand{\art}[6]{{\sc #1, \rm #2, \it #3\/ \bf #4 \rm (#5), \mbox{#6}.}}
\newcommand{\book}[3]{{\sc #1, \it #2, \rm #3.}}
\newcommand{\AND}{{\rm and }}
\newcommand{\artprep}[3]{{\sc #1, \rm #2, \rm #3.}}
\newtheoremstyle{descriptive}%
  {\topsep}   
  {\topsep}   
  {\rmfamily} 
  {}          
  {\bfseries} 
  {.}         
  { }         
  {}          
\newtheoremstyle{propositional}%
  {\topsep}   
  {\topsep}   
  {\itshape}  
  {}          
  {\bfseries} 
  {.}         
  { }         
  {}          
\newtheoremstyle{remarkstyle}%
  {\topsep}   
  {\topsep}   
  {\rmfamily}  
  {}          
  {\itshape} 
  {.}         
  { }         
  {}          
\theoremstyle{propositional}
\newtheorem{thm}{Theorem}[section]
\newtheorem{lem}[thm]{Lemma}
\newtheorem{prop}[thm]{Proposition}
\theoremstyle{descriptive}
\newtheorem{defi}[thm]{Definition}
\newtheorem{remark}[thm]{Remark}
\newtheorem{example}[thm]{Example}
\renewenvironment{proof}[1][\proofname]{\par
  \pushQED{\qed}%
  \normalfont
  \trivlist
  \item[\hskip\labelsep
        \itshape
    #1\@addpunct{.}]\ignorespaces
}{%
  \popQED\endtrivlist\@endpefalse
}
\newcommand{\setm}{\setminus}
\gdef\eeaa#1pt{#1}}      
\def\accentadjtext#1{\setbox0\hbox{$#1$}\kern   
                \expandafter\eeaa\the\fontdimen1\textfont1 \ht0 }
\def\accentadjscript#1{\setbox0\hbox{$#1$}\kern 
                \expandafter\eeaa\the\fontdimen1\scriptfont1 \ht0 }
\def\accentadjscriptscript#1{\setbox0\hbox{$#1$}\kern   
                \expandafter\eeaa\the\fontdimen1\scriptscriptfont1 \ht0 }
\def\accentadjtextback#1{\setbox0\hbox{$#1$}\kern       
                -\expandafter\eeaa\the\fontdimen1\textfont1 \ht0 }
\def\accentadjscriptback#1{\setbox0\hbox{$#1$}\kern     
                -\expandafter\eeaa\the\fontdimen1\scriptfont1 \ht0 }
\def\accentadjscriptscriptback#1{\setbox0\hbox{$#1$}\kern 
               -\expandafter\eeaa\the\fontdimen1\scriptscriptfont1 \ht0 }
\def\itoverline#1{{\mathsurround0pt\mathchoice
        {\rlap{$\accentadjtext{\displaystyle #1}
                \accentadjtext{\vrule height1.593pt}
                \overline{\phantom{\displaystyle #1}
                \accentadjtextback{\displaystyle #1}}$}{#1}}
        {\rlap{$\accentadjtext{\textstyle #1}
                \accentadjtext{\vrule height1.593pt}
                \overline{\phantom{\textstyle #1}
                \accentadjtextback{\textstyle #1}}$}{#1}}
        {\rlap{$\accentadjscript{\scriptstyle #1}
                \accentadjscript{\vrule height1.593pt}
                \overline{\phantom{\scriptstyle #1}
                \accentadjscriptback{\scriptstyle #1}}$}{#1}}
        {\rlap{$\accentadjscriptscript{\scriptscriptstyle #1}
                \accentadjscriptscript{\vrule height1.593pt}
                \overline{\phantom{\scriptscriptstyle #1}
                \accentadjscriptscriptback{\scriptscriptstyle #1}}$}{#1}}}}
\def\vint{\mathop{\mathchoice%
          {\setbox0\hbox{$\displaystyle\intop$}\kern 0.22\wd0%
           \vcenter{\hrule width 0.6\wd0}\kern -0.82\wd0}%
          {\setbox0\hbox{$\textstyle\intop$}\kern 0.2\wd0%
           \vcenter{\hrule width 0.6\wd0}\kern -0.8\wd0}%
          {\setbox0\hbox{$\scriptstyle\intop$}\kern 0.2\wd0%
           \vcenter{\hrule width 0.6\wd0}\kern -0.8\wd0}%
          {\setbox0\hbox{$\scriptscriptstyle\intop$}\kern 0.2\wd0%
           \vcenter{\hrule width 0.6\wd0}\kern -0.8\wd0}}%
          \mathopen{}\int}
\newcommand{\grad}{\nabla}
\newcommand{\disp}{\displaystyle}
\DeclareMathOperator{\Div}{div}
\DeclareMathOperator{\cp}{cap}
\newcommand{\bdry}{\partial}
\newcommand{\bdy}{\bdry}
\newcommand{\loc}{_{\rm loc}}
\newcommand{\noi}{\noindent}
\newcommand{\simge}{\gtrsim}
\newcommand{\simle}{\lesssim}
\newcommand{\A}{{\mathcal A}}
\newcommand{\B}{{\mathcal B}}
\newcommand{\al}{\alpha}
\newcommand{\la}{\lambda}
\newcommand{\ka}{\kappa}
\newcommand{\Om}{\Omega}
\newcommand{\eps}{\varepsilon}
\newcommand{\p}{{$p\mspace{1mu}$}}
\newcommand{\R}{\mathbf{R}}
\newcommand{\ub}{\bar{u}}
\newcommand{\ut}{\tilde{u}}
\newcommand{\vt}{\tilde{v}}
\newcommand{\ft}{\tilde{f}}
\newcommand{\fb}{\bar{f}}
\newcommand{\wt}{\widetilde{w}}
\newcommand{\Ft}{\widetilde{F}}
\newcommand{\Et}{\widetilde{E}}
\newcommand{\clG}{\itoverline{G}}
\newcommand{\clB}{\itoverline{B}}
\newcommand{\clBprime}{{\,\overline{\!B'}}}
\renewcommand{\phi}{\varphi}
\newcommand{\phit}{{\widetilde{\phi}}}
\newcommand{\phib}{{\itoverline{\phi}}}
\newcommand{\Wp}{W^{1,p}}
\newcommand{\Llp}{L^{1,p}}
\newcommand{\Hp}{H^{1,p}}
\newcommand{\Wploc}{W^{1,p}\loc}
\newcommand{\Hploc}{H^{1,p}\loc}
\newcommand{\eqv}{\ensuremath{\mathchoice{\quad \Longleftrightarrow \quad}{\Leftrightarrow}}
                {\Leftrightarrow}{\Leftrightarrow}}
\def\cprime{{\mathsurround0pt$'$}}
\numberwithin{equation}{section}
\newenvironment{ack}{\medskip{\it Acknowledgement.}}{}
\begin{document}

\authortitle{Jana Bj\"orn  and Abubakar Mwasa}
{Behaviour at infinity for solutions of a mixed boundary value problem via inversion}
\title{Behaviour at infinity for solutions of a mixed nonlinear elliptic boundary value problem via inversion}
\author{
Jana Bj\"orn \\
\it\small Department of Mathematics, Link\"oping University, 
\it\small SE-581 83 Link\"oping, Sweden \\ 
\it \small jana.bjorn@liu.se, ORCID\/\textup{:} 0000-0002-1238-6751
\\
\\
Abubakar Mwasa \\
\it\small Department of Mathematics, Link\"oping University, 
\it\small SE-581 83 Link\"oping, Sweden\/{\rm ;} \\
\it\small Department of Mathematics, Busitema University, 
\it\small P.O.Box 236, Tororo, Uganda    \\   
\it \small abubakar.mwasa@liu.se, amwasa@sci.busitema.ac.ug, ORCID\/\textup{:} 0000-0003-4077-3115
}

\date{}

\maketitle

\noindent{\small
\begin{abstract}
\noi We study a mixed boundary value problem for the quasilinear
elliptic equation $\Div\A(x,\grad u(x))=0$ in an open infinite
circular half-cylinder 
with prescribed continuous Dirichlet data on a part of the boundary
and zero conormal derivative on the rest. 
We prove the existence and uniqueness of bounded 
weak solutions to the mixed problem
and characterize the regularity of the point at infinity in terms of
\p-capacities. 
For solutions with only Neumann data near the point at infinity we
show that they behave in exactly one of three possible ways, similar to the
alternatives in the Phragm\'en--Lindel\"of principle.
\end{abstract}
}

\bigskip

\noindent {\small \emph{Key words and phrases}: 
continuous Dirichlet data, existence and uniqueness of solutions,
mixed boundary value problem,  Phragm\'en--Lindel\"of trichotomy,
quasilinear elliptic equation, regularity at infinity, 
Wiener criterion.
}

\medskip

\noindent {\small Mathematics Subject Classification (2020):
Primary: 35J25.
Secondary:   35J62, 35B40.
}

\section{Introduction}
The \emph{Dirichlet problem} on a nonempty open set  $\Om\subset\R^n$
entails finding a function $u$ which solves a certain partial
differential equation in $\Om$ with the prescribed boundary data $u=f$
on $\bdy\Om$.  
When the boundary data $g:\bdy\Om\to\R$ are taken as the normal derivative
$\bdy u/\bdy \nu=g$, the problem is called a \emph{Neumann problem}.
More general directional derivatives, such as the conormal derivative 
$Nu$ considered below, are also possible.
In this paper, we study a \emph{mixed boundary value problem} 
for the quasilinear equation
\begin{equation}   \label{eq-divA}
\Div\A(x,\grad u(x))=0,
\end{equation} 
where a Dirichlet condition is prescribed on a part of the boundary,
while the rest carries the zero conormal derivative 
\[
Nu(x):= \A(x,\grad u(x))\cdot \nu(x) = 0,
\]
where $\nu(x)$ is the unit outer normal at $x$.
Equation \eqref{eq-divA} is considered in an infinite circular
half-cylinder and the vector-valued function $\A$ satisfies the
standard ellipticity conditions with a parameter $p>1$. 
The \p-Laplace equation $\Div(|\grad u|^{p-2}\grad u)=0$ is included
as a special case.

We prove the existence and uniqueness of bounded continuous weak solutions to the above
mixed boundary value problem with continuous Dirichlet boundary data
$f$ on a closed set $F\subset \bdy G$, which can be bounded or unbounded.
More precisely, if $f\in C(F)$ is such that (for unbounded $F$)
\begin{equation}  \label{eq-lim-at-infty-intro}
f(\infty):=\lim_{F\ni x\to\infty}f(x) \quad \text{exists and is finite,}
\end{equation}
then there exists a unique bounded continuous weak solution 
$u$ of \eqref{eq-divA} with zero conormal derivative 
on $\bdy G\setm F$ and such that 
\begin{equation*}	
\lim_{x\to x_0}u(x)=f(x_0) 
\quad \text{for all $x_0\in F$ outside a set of 
$C_p$-capacity zero,}
\end{equation*}
see Theorem~\ref{thm-uniq-sol} and Remark~\ref{rem-bdd-F0}.
Moreover,  $u$ is H\"older continuous at all points in the Neumann
boundary $\bdy G\setm F$.
For Dirichlet data of Sobolev type, existence and uniqueness 
in the Sobolev sense are proved in Theorem~\ref{thm-ex-Sob}.

We also characterize the regularity of the point at infinity by a
\emph{Wiener type criterion}.  
Roughly speaking, if the Dirichlet boundary $F$ is sufficiently thick
at $\infty$ in terms of a variational \p-capacity adapted to the
mixed problem, then for every $f\in C(F)$
satisfying~\eqref{eq-lim-at-infty-intro},
the unique solution of the above mixed problem satisfies 
\begin{equation}  \label{eq-reg-intro}
\lim_{x\to \infty}u(x)=f(\infty). 
\end{equation}
Conversely, thickness of $F$ at $\infty$ is also necessary in order for
\eqref{eq-reg-intro} to hold for all $f\in C(F)$, see Theorem~\ref{thm-Breg-infty}.
On the other hand, if $F$ is bounded, i.e.\ when only the Neumann
condition is used in a proximity of the point at infinity, then we show 
in Theorem~\ref{thm-cases} that each solution
$u$ of equation~\eqref{eq-divA} with zero conormal
derivative near $\infty$ behaves in one of the following three ways as $x\to\infty$:
\begin{enumerate}
\renewcommand{\theenumi}{\textup{(\roman{enumi})}}%
\renewcommand{\labelenumi}{\theenumi}
\item \label{intro-i} 
The solution has a finite limit $\disp u(\infty):=\lim_{x\to\infty} u(x)$.
\item \label{intro-ii} 
The solution tends roughly linearly to either $\infty$ or $-\infty$. 
\item \label{intro-iii} 
The solution changes sign and approaches both $\infty$ and $-\infty$, i.e.\ 
\[
\limsup_{x\to\infty} u(x)=\infty \quad \text{and} \quad
\liminf_{x\to\infty} u(x)=-\infty.
\]
\end{enumerate}
Similar trichotomy results at $\infty$ for solutions of the Neumann problem
for the linear uniformly elliptic equation $\Div(A(x)\grad u)=0$ 
were obtained in Ibragimov--Landis~\cite{IbrLan97},~\cite{IbrLan98},
Lakhturov~\cite{Lakh} and Landis--Panasenko~\cite{LanPanas}.
For (sub/super)so\-lu\-tions of various PDEs in unbounded domains with only Dirichlet data, 
similar alternative behaviour  is often referred to as Phragm\'en--Lindel\"of
principle and has been extensively studied.
See e.g.\ Gilbarg~\cite{Gilbarg}, Hopf~\cite{Hopf},
Horgan--Payne~\cite{HorgPay1},~\cite{HorgPay2},
Jin--Lancaster~\cite{JinLanc2000},~\cite{JinLanc2003}, 
Lindqvist~\cite{Lindqvist}, Lundstr\"om~\cite{Lundstr},
Quintanilla~\cite{Quintanilla}, Serrin~\cite{Serrin} and Vitolo~\cite{Vitolo}.

Compared with pure Dirichlet and Neumann problems, the literature on
mixed boundary value problems is less extensive, especially in unbounded domains.
Mixed problems are sometimes called \emph{Zaremba problems}, 
mainly in the Russian literature, 
since they were first considered for the Laplace equation $\Delta u=0$ 
by Zaremba~\cite{ZarProb} in 1910.
For linear uniformly elliptic equations of the type $\Div(A(x)\grad u)=0$, 
they were studied by e.g.\ Ibragimov~\cite{IbrDokl},~\cite{IbrSbor},
Kerimov~\cite{KerimovInfty},~\cite{KerimovWien} and
Novruzov~\cite{Novruzov}.
A~mixed problem for linear equations of nondivergence type was
considered in Cao--Ibragimov--Nazarov~\cite{CaoIbrNaz} and Ibragimov--Nazarov~\cite{IbrNaz}.

Existence of weak solutions for mixed and Neumann problems 
for linear operators in very general unbounded domains
was recently obtained using an exhaustion
with bounded domains by Chipot~\cite{Chipot} and Chipot--Zube~\cite{ChipZube}.
Wi\'sniewski~\cite{Wisn10},~\cite{Wisn16} studied the decay at
infinity of solutions to mixed problems with coefficients approaching the Laplace operator
in unbounded conical domains. 
On the other hand, nonexistence Liouville type results for mixed problems of the
form $-\Delta u =f(u)$ in various unbounded domains were obtained in 
Damascelli--Gladiali~\cite{DamGlad}.
A priori estimates, unique solubility and traces for linear elliptic systems of
divergence type in Ahlfors regular cylindrical domains with Dirichlet 
and Neumann data independent of the last variable were obtained in 
Auscher--Egert~\cite{AuschEg} by the so-called DB-approach adapted from the upper half-space.
All the above papers deal with linear operators.
We point out that even for $p=2$, equation~\eqref{eq-divA} considered
here can be nonlinear.
This happens for example when $\A(x,q):=a(q/|q|)q$, where 
$a$ is a sufficiently smooth strictly positive scalar function on the unit
sphere; see Example~\ref{ex-nonlin-p=2}.

In Kerimov--Maz\cprime ya--Novruzov~\cite{KMV},
the regularity of the point at infinity for the Zaremba problem for the
Laplace equation $\Delta u=0$  
in an infinite half-cylinder was characterized by means of a Wiener
type criterion.
A similar problem for certain weighted linear elliptic equations was
studied in Bj\"orn~\cite{JB} 
and more recently for the \p-Laplace equation in Bj\"orn--Mwasa~\cite{BM}.
The results in this paper partially extend the ones in~\cite{KMV},
\cite{JB} and \cite{BM} to general quasilinear elliptic equations 
of the form~\eqref{eq-divA}, but we also address other properties
of the solutions.

In order to achieve our results, we make use of the change of
variables introduced in Bj\"orn~\cite{JB}, and later adopted by
Bj\"orn--Mwasa~\cite{BM}, to transform the infinite 
half-cylinder $G$ and the quasilinear elliptic
equation~\eqref{eq-divA} into a unit half-ball and a degenerate
elliptic equation 
\begin{equation} \label{eq-B-xi}
\Div\B(\xi,\grad\ut(\xi))=0,
\end{equation} 
with the $A_p$-weight $w(\xi)=|\xi|^{p-n}$, see Section~\ref{sect-cyl-ball}.

After the above transformation, we are able to eliminate the Neumann
data by reflecting  
the unit half-ball and equation~\eqref{eq-B-xi} to the whole unit
ball, leaving only the Dirichlet data. 
This is done in Section~\ref{sect-pro-ope-B}.
In Section~\ref{sect-existence},
we then use tools for Dirichlet 
problems from Heinonen--Kilpel\"ai\-nen--Martio~\cite{HKM}
and Bj\"orn--Bj\"orn--Mwasa~\cite{BBM} to prove the existence and
uniqueness of continuous weak solutions to the mixed boundary value
problem for~\eqref{eq-divA} in the cylinder, with zero conormal
derivative and continuous Dirichlet data.

In Section~\ref{sect-Wiener}, we show that regularity at infinity for
the mixed problem for~\eqref{eq-divA} is equivalent to the regularity
of the origin for the Dirichlet problem for~\eqref{eq-B-xi}.
This can in turn be characterized in terms of weighted \p-capacities
using a Wiener criterion, provided by~\cite[Theorem~21.30]{HKM}
and Mikkonen~\cite{M}.
This Wiener criterion for~\eqref{eq-B-xi} is then transferred back to the 
cylinder to characterize the regularity at infinity for~\eqref{eq-divA}
by means of a variational \p-capacity adapted to the cylinder.

Finally, in Section~\ref{Sect-remov}, we use estimates 
from~\cite[Sections~6 and~7]{HKM} 
for capacitary potentials and singular 
solutions of~\eqref{eq-B-xi} to prove the above
Phragm\'en--Lindel\"of type trichotomy \ref{intro-i}--\ref{intro-iii}.

\begin{ack}
J.~B. was partially supported by the Swedish Research Council grant
2018-04106.
A.~M. was supported by 
the SIDA (Swedish International Development
Cooperation Agency)
project 316-2014  
``Capacity building in Mathematics and its applications'' 
under the SIDA bilateral program with the Makerere University 2015--2020,
contribution No.\ 51180060.
\end{ack}

\section{Notation and preliminaries} \label{sect-pre-not}

Let $G= B'\times (0,\infty)\subset \R^n$ be an open 
infinite circular half-cylinder, where 
\[
B'=\{x'\in\R^{n-1}:|x'|<1\}
\]
 is the unit ball in $\R^{n-1}$.
Points in $\R^n=\R^{n-1}\times \R$, $n\geq 2$, are denoted by
$x=(x',x_n)=(x_1,\cdots,x_{n-1}, x_n)$. 
Let $F$ be a closed subset of the closure $\clG$ of $G$. 
Assume that $F$ contains the base $B'\times \{0\}$ of $G$.
We consider the following mixed boundary value problem 
\begin{equation}  \label{mixed-bvp}
\begin{cases}
  \Div\A(x,\grad u)=0,    &\text{in } G\setm F,\\ 
  u=f,& \text{on } F_0:=F\cap\bdy(G\setm F),\text{ (Dirichlet data)}, \\
  Nu:= \A(x,\grad u)\cdot \nu =0
&\text{on } \bdy G\setm F, \text{ (generalized Neumann data)},
\end{cases}
\end{equation}
where $Nu$ is the conormal derivative, $\nu$ is the unit outer normal of $G$
and $\bdy G$ denotes the boundary of $G$.

Let $1<p<\infty$ be fixed. 
The mapping $\A: \clG \times \R^n\to \R^n$ in \eqref{mixed-bvp}
is assumed to satisfy the standard ellipticity and boundedness conditions:
\begin{itemize} 
\item $\A(\cdot,q)$ is measurable for all $q\in \R^n$,
\item $\A(x,\cdot)$ is continuous for a.e.\ $x\in \clG$,
\item There are constants $0<\al_1\le \al_2<\infty$ such that for all 
$q, q_1, q_2\in \R^n$, $0\ne\la\in\R$ and a.e.\ $x\in \clG$,
\begin{align}
\A(x,q) \cdot q &\ge \al_1 |q|^p, \label{eq-ell-A} \\
|\A(x,q)| &\le \al_2 |q|^{p-1}, \label{eq-bdd-A} \\
\A(x,\la q) & = \la |\la|^{p-2} \A(x,q),  \label{eq-homog-A}
\end{align}
and 
\begin{equation}
(\A(x,q_1)-\A(x,q_2))\cdot(q_1-q_2) >0 \quad\text{when }q_1\neq q_2.
\label{eq-monot-A} 
\end{equation}
\end{itemize}

The quasilinear elliptic equation  $\Div\A(x,\grad u)=0$
and the conormal derivative in~\eqref{mixed-bvp}
will be considered in the weak sense as follows.

\begin{defi}    \label{defi-weak-divA}
A function 
\[
u\in\Wp\loc(\clG \setm F) 
:= \{u|_{\clG\setm F}: u \in \Wp\loc(\R^n\setm F)\},
\]
is a \emph{weak solution} of the equation $\Div\A(x,\grad u)=0$
in $G\setm F$ with \emph{zero conormal derivative} 
on $\partial G\setm F$ if the integral identity 
\begin{equation} \label{eq-weak-sol-divA}
\int_{G\setm F} \A(x,\nabla u) \cdot \nabla\phi\, dx=0
\end{equation}
holds for all $\phi \in C_0^{\infty}(\R^n\setm F)$,
where $\cdot$ denotes the scalar product in $\R^n$ and
$C_0^{\infty}(\Omega)$ is the space of all infinitely many 
times continuously differentiable functions with compact support in $\Omega\subset\R^n$.
\end{defi}

\begin{remark}
If the whole boundary $\bdy G$ is contained in $F$, then there is no Neumann 
condition and the  mixed boundary value problem reduces to a purely Dirichlet problem.
\end{remark}

As usual, the local space $\Wp\loc(\Om)$ (and later $\Hp\loc(\Om,w)$) 
consists of those functions $u$
which belong to $\Wp(\Om')$ (resp.\ $\Hp(\Om',w)$) for all $\Om'\Subset\Om$,
where $\Om'\Subset\Om$ means that $\overline{\Om'}$ is a compact subset of $\Om$.

As mentioned in the introduction, equation~\eqref{eq-divA} can
  be nonlinear even for $p=2$, as illustrated by the following example.

\begin{example}   \label{ex-nonlin-p=2}
For $p=2$ and $x,q\in\R^n$, let 
\begin{equation}   \label{eq-def-A}
\A(x,q) = \begin{cases}  0 & \text{if } q=0, \\
           a\bigl( \frac{q}{|q|} \bigr) q & \text{if } q\ne0, 
        \end{cases}
\end{equation}
where 
the scalar function $a$ is strictly positive and continuous on the unit
sphere $\bdy B(0,1)$ in $\R^n$ and such that 
\begin{equation}   \label{eq-a/a}
\frac{a(\theta')}{a(\theta)} > \frac{1-\sin\al}{1+\sin\al}
\quad \text{for any $\theta,\theta'\in\bdy B(0,1)$ with 
    $\theta\cdot \theta'= \cos\al >0$,} 
\end{equation}
where $\al=\al(\theta,\theta')$ is the acute angle between
  $\theta$ and $\theta'$.
Then $\A$ satisfies the ellipticity
conditions~\eqref{eq-ell-A}--\eqref{eq-monot-A}.
 
Indeed, the only nontrivial verification is that of~\eqref{eq-monot-A}.
For this, we can clearly assume that $|q|=1$ and
\[
0<a(\theta')\le a(\theta), \quad \text{where } 
\theta=q  
\text{ and } \theta'=\frac{q'}{|q'|}.
\]
Condition~\eqref{eq-monot-A} then means that the angle between the
vectors $q'-q$ and $\frac{a(\theta')}{a(\theta)}q'-q$  is strictly less
than $\frac{\pi}{2}$.
Since $a(\theta')/a(\theta)\le1$, a geometric consideration shows that
this is clearly satisfied if $\al\ge\frac{\pi}{2}$ or if 
\[
\al<\frac{\pi}{2} \text{ and }  |q'|\le \frac{1}{\cos\al}.
\]
So assume that $\al<\frac{\pi}{2}$ and $|q'|>1/\cos\al$.
Aplying the cosine theorem to the triangles spanned by the vectors $q$
and $q'$, or by
\[
q \text{ and } q'':=\frac{a(\theta')}{a(\theta)}q',
\]
respectively, as well as to the triangle spanned by $q'-q$ and
$q''-q$, we see that the angle between $q'-q$ and $q''-q$ is
$<\frac{\pi}{2}$ if and only if
\[
|q|^2 + |q'|^2 -2|q|\,|q'| \cos\al + |q|^2 + |q''|^2 -2|q|\,|q''| \cos\al
> |q'-q''|^2.
\]
Since $|q|=1$, this is equivalent to 
\[
1 - \biggl( 1+\frac{a(\theta')}{a(\theta)} \biggr) |q'| \cos\al 
   + \frac {a(\theta')}{a(\theta)} |q'|^2 >0,
\]
i.e.
\[
\frac{a(\theta')}{a(\theta)} > \frac{|q'|\cos\al -1}{|q'|(|q'|-\cos\al)}.
\]
Maximizing the right-hand side over $|q'|> 1/\cos\al$, we find that
its maximum is attained when 
\[
|q'|=\frac{1+\sin\al}{\cos\al} \quad \text{and equals }
\frac{1-\sin\al}{1+\sin\al}.
\]
This justifies the requirement~\eqref{eq-a/a}.

A concrete example of a function satisfying~\eqref{eq-a/a} is 
$a(\theta)=e^{\theta\cdot q_0}$, for any fixed vector $q_0\in\R^n$
with $|q_0|<1/\sqrt2$.
Indeed, with this choice, 
\[
\frac{a(\theta')}{a(\theta)} = e^{(\theta'-\theta)\cdot q_0} 
> 1 - \frac{|\theta'-\theta|}{\sqrt2},
\]
while
\[
|\theta'-\theta|^2 = 2 - 2\theta\cdot \theta' = 2 - 2\cos\al
\le 2\sin^2 \al,
\]
from which~\eqref{eq-a/a} readily follows.

Clearly, a modification of~\eqref{eq-def-A} and~\eqref{eq-a/a}
can be made so that $\A(x,q)$ and $a(x,\theta)$ also
depend on  $x\in\R^n$.
\end{example}

Throughout the paper, unless otherwise stated, $C$ will denote any positive 
constant whose real value is not important and need not be the same 
at each point of use. 
It can even vary within a line. 
By $a\simle b$, we mean that there exists a positive constant $C$, independent 
of $a$ and $b$, such that $a\leq Cb$.
Similarly, $a\simge b$ means $b\simle a$, while $a\simeq b$ stands for
$a\simle b\simle a$.

\section{Transformation of the half-cylinder}  
\label{sect-cyl-ball}

In this section, the quasilinear elliptic operator $\Div\A(x,\grad u)$
in $G$ is shown to
correspond to a weighted quasilinear elliptic operator on the unit half-ball.
We will use the following change of variables 
introduced in Bj\"orn~\cite[Section~3]{JB}.

Let $\ka>0$ be a fixed constant.
Define the mapping 
\[
T:\R^n \longrightarrow T(\R^n) = \R^n\setm\{(\xi',\xi_n)\in \R^n: \xi'=0
\text{ and }\xi_n\leq 0\}
\]
by $T(x',x_n)=(\xi',\xi_n)$, where
\begin{equation}   \label{eq-def-xi-from-x}
\xi'= \frac{2e^{-\ka x_n} x'}{1+|x'|^2} \quad \text{and} \quad
\xi_n=\frac{e^{-\ka x_n}(1-|x'|^2)}{1+|x'|^2}.
\end{equation}
We will use $x=(x',x_n)$  for points in $\clG$ and 
$\xi=(\xi',\xi_n)=(\xi_1,\ldots,\xi_{n-1},\xi_n)\in \R^n$ for points 
in the space transformed by $T$.
Note that 
\begin{align*}
T(G) &= \{\xi\in\R^n: |\xi|<1\text{ and } \xi_n>0\}, \\ 
T(\clG) &= \{\xi\in\R^n: 0<|\xi|\le1\text{ and } \xi_n\ge0\}
\end{align*}
are the open and the closed upper unit half-ball, respectively, with the
origin $\xi=0$ removed. 
From \eqref{eq-def-xi-from-x} it is easy to see that
\[
|\xi|=|T(x)|=e^{-\ka x_n} \to 0 \quad\text{as } x_n \to \infty,
\]
that is, the point 
at infinity for the half-cylinder $G$ corresponds to 
the origin $\xi=0$ in $T(G)$.

The mapping $T$ is a smooth diffeomorphism between $\R^n$ and $T(\R^n)$, 
see Bj\"orn--Mwasa~\cite[Lemma~3.1]{BM}.
A direct calculation shows that the inverse mapping $T^{-1}$ of $T$
is given by
\begin{equation*}   
x'= \frac{\xi'}{|\xi|+\xi_n} \quad \text{and} \quad
x_n=-\frac{1}{\ka}\log|\xi|.
\end{equation*}

In the following lemma we show how the operator $\Div\A(x,\grad u)$
on the half-cylinder is transformed under $T$ to the unit half-ball,
cf.~\cite[Section~3]{BM}.

\begin{lem}              \label{lem-int-id-Ball}
Let $u,v\in \Wp(\Om)$ for some open set $\Om\subset G$ and let
$\ut= u\circ T^{-1}$ and $\vt= v\circ T^{-1}$.
Then for any measurable set $A\subset \Om$,
\begin{equation}  \label{eq-divA-to-B}
\int_{A} \A(x,\nabla u)\cdot\nabla v\, dx
=\int_{T(A)} \B(\xi,\grad\ut)\cdot\grad \vt \,d\xi,
\end{equation}
where $\B$ is for $\xi=Tx\in T(\clG)$ and $q\in\R^n$ defined by
\begin{equation}   \label{eq-def-B}
\B(\xi,q)= |J_T(x)|^{-1} dT(x)\A(x,dT^*(x)q). 
\end{equation}
\end{lem}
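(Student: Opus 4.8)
\noindent\emph{Proof proposal.}
The plan is to reduce \eqref{eq-divA-to-B} to the ordinary change-of-variables
formula combined with the chain rule for the gradient under the smooth
diffeomorphism $T$. First I would record the linear-algebraic relation
between the gradients in the two coordinate systems. Since $T$ is a smooth
diffeomorphism with smooth inverse on $T(\Om)$, composition preserves Sobolev
regularity, so $\ut,\vt\in\Wploc(T(\Om))$, and differentiating
$\ut(\xi)=u(T^{-1}(\xi))$ by the chain rule gives, for a.e.\ $x=T^{-1}(\xi)$,
\[
\grad\ut(\xi)=(dT^*(x))^{-1}\grad u(x),
\qquad\text{equivalently}\qquad
\grad u(x)=dT^*(x)\,\grad\ut(\xi),
\]
where $dT^*(x)$ is the transpose (adjoint) of the differential $dT(x)$; the same
relation holds for $v$ and $\vt$. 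This is exactly the combination $dT^*(x)q$ with
$q=\grad\ut$ that appears inside $\A$ in the definition~\eqref{eq-def-B} of $\B$.

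Next I would manipulate the integrand pointwise. Using the adjoint identity
$a\cdot(Mb)=(M^*a)\cdot b$ with $M=dT^*(x)$, so that $M^*=dT(x)$, together with
the gradient relations above, I obtain for a.e.\ $x$
\[
\A(x,\grad u)\cdot\grad v
=\A(x,dT^*(x)\grad\ut)\cdot\bigl(dT^*(x)\grad\vt\bigr)
=\bigl(dT(x)\,\A(x,dT^*(x)\grad\ut)\bigr)\cdot\grad\vt.
\]
Multiplying and dividing by the Jacobian factor, the bracketed vector is precisely
$|J_T(x)|\,\B(\xi,\grad\ut)$. Since $T$ is a diffeomorphism, the change-of-variables
theorem gives $dx=|J_T(x)|^{-1}\,d\xi$ on $A$, and therefore
\[
\int_{A}\A(x,\grad u)\cdot\grad v\,dx
=\int_{T(A)}|J_T(x)|^{-1}\bigl(dT(x)\,\A(x,dT^*(x)\grad\ut)\bigr)\cdot\grad\vt\,d\xi
=\int_{T(A)}\B(\xi,\grad\ut)\cdot\grad\vt\,d\xi,
\]
with $x=T^{-1}(\xi)$, which is the asserted identity.

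The computation is essentially bookkeeping, so the points that actually require
care are the following. First is the validity of the chain rule for the Sobolev
functions $u,v$ rather than smooth ones: this holds because $T$ is smooth with
locally bounded derivatives, and one may either invoke the standard chain rule for
the composition of a $\Wp$-function with a $C^1$ diffeomorphism, or approximate
$u,v$ by $C^\infty$ functions in $\Wp$, verify the identity for the smooth
approximants, and pass to the limit (the convergence of the right-hand side being
controlled by the local boundedness of $dT$, $dT^*$ and $|J_T|^{-1}$). Second is
keeping the transpose conventions consistent, since the gradient is a covector and
hence transforms by $(dT^*)^{-1}$, whereas the vector field $\A$ is transported by
$dT$; I expect this linear-algebraic bookkeeping to be the only genuinely
error-prone step. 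Finally, integrability of both sides is guaranteed by the growth
bound~\eqref{eq-bdd-A}: $|\A(x,\grad u)|\le\al_2|\grad u|^{p-1}\in L^{p/(p-1)}(\Om)$
while $\grad v\in L^{p}(\Om)$, so the integrand lies in $L^1$ by Hölder's
inequality, and this is preserved under the change of variables.
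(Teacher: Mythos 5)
Your proposal is correct and follows essentially the same route as the paper: the chain rule $\grad u(x)=dT^*(x)\grad\ut(\xi)$, moving $dT^*$ across the scalar product to become $dT$, and the change of variables producing the factor $|J_T(x)|^{-1}$ in the definition of $\B$. The extra remarks on the Sobolev chain rule and integrability are sound but not needed beyond what the paper records.
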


Here, $J_T(x) = \det(dT(x))$ denotes the Jacobian of $T$ at $x$
and $dT^*(x)$ is the transpose of the differential $dT(x)$ of $T$ at $x$,
seen as $(n\times n)$-matrices.
In~\eqref{eq-def-B}, both $q$ and $\A(x,dT^*(x)q)$ are regarded as column 
vectors for the matrix multiplication to make sense.

\begin{proof}
First, we rewrite the scalar product on the left-hand side of \eqref{eq-divA-to-B}
using matrix multiplication as
\[
\A(x, \grad u) \cdot\grad v 
     =(\grad v)^*\A(x,\grad u),
\]
where both $\A(x, \grad u)$ and $\grad v$ are seen as column vectors.
Using the change of variables $\xi=T(x)$, together with the chain rule 
\begin{equation*}  
\grad u(x)=dT^*(x)\grad \ut(\xi), 
\quad \text{where }\xi=T(x), 
\end{equation*}
we get
\begin{align*}
&\int_{A}(\grad v)^* \A(x,\grad u) \,dx \\
& \quad \quad \quad
= \int_{T(A)}(dT^*(x)\grad\vt)^*\A(x, dT^*(x)\grad\ut)|J_T(x)|^{-1} \,d\xi \\
&\quad \quad \quad
=\int_{T(A)}|J_T(x)|^{-1}dT(x)\A(x, dT^*(x)\grad\ut)\cdot\grad\vt \,d\xi\\
&\quad \quad \quad
=\int_{T(A)}\B(\xi,\grad\ut)\cdot\grad\vt\,d\xi. \qedhere
\end{align*}
\end{proof}

In view of the integral identity \eqref{eq-weak-sol-divA}, 
Lemma~\ref{lem-int-id-Ball} shows
that the quasilinear equation (\ref{eq-divA}) 
on $G\setm F$ will be transformed by $T$ into the equation
\begin{equation}   \label{eq-div-B}
\Div \B(\xi,\grad \ut)=0  \quad \text{on } T(G\setm F),
\end{equation}
with a proper interpretation of the function spaces and
the zero Neumann condition. 

To prove the fundamental properties of the transformed operator 
$\B(\xi,\grad\ut)$, we will need the following estimates from 
Bj\"orn--Mwasa~\cite[Lemma~3.3]{BM}.

\begin{lem}   \label{lem-JB}
There exist constants $C_1,C_2>0$ such that 
if $x,y\in B'\times\R$ and $x_n\le y_n$, then
\begin{equation*} 
C_1e^{-\ka y_n}|x-y|\le |T(x)-T(y)|\le C_2e^{-\ka x_n}|x-y|.
\end{equation*}
In particular, if $x\in B'\times\R$ 
and  $q\in\R^n$ then  
\[
|dT^*(x)q|\simeq |dT(x)q|\simeq e^{-\ka x_n}|q|
\quad \text{and}\quad 
|J_{T}(x)|\simeq e^{-\ka nx_n},
\]
where the comparison constants in $\simeq$ depend on $\ka$, but
are independent of $x$ and $q$.
 \end{lem}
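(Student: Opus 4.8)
The plan is to exploit the explicit product structure of $T$. Writing $r=e^{-\ka x_n}$, one reads off from \eqref{eq-def-xi-from-x} that
\[
T(x)=e^{-\ka x_n}\,\omega(x'),\qquad
\omega(x')=\biggl(\frac{2x'}{1+|x'|^2},\ \frac{1-|x'|^2}{1+|x'|^2}\biggr),
\]
where $\omega$ is the inverse stereographic projection of $x'\in B'$ onto the unit sphere $\bdy B(0,1)\subset\R^n$; in particular $|\omega(x')|=1$ and $\omega$ maps $B'$ into the open upper hemisphere. Thus $e^{-\ka x_n}$ is a radial variable and $\omega(x')$ an angular one, consistent with $|T(x)|=e^{-\ka x_n}$. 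The heart of the argument is then a single Pythagorean-type identity: for $x,y\in B'\times\R$, abbreviating $s=e^{-\ka y_n}$, $\omega=\omega(x')$, $\eta=\omega(y')$ and expanding the inner product via $1-\omega\cdot\eta=\tfrac12|\omega-\eta|^2$,
\[
|T(x)-T(y)|^2=(r-s)^2+rs\,|\omega-\eta|^2,
\]
so the radial and angular contributions decouple cleanly.

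For the angular term I would record the elementary closed form, obtained by a direct computation,
\[
|\omega(x')-\omega(y')|^2=\frac{4\,|x'-y'|^2}{(1+|x'|^2)(1+|y'|^2)}.
\]
Since $1\le(1+|x'|^2)(1+|y'|^2)<4$ on $B'$, this gives $|x'-y'|\le|\omega-\eta|\le 2|x'-y'|$, i.e.\ $|\omega(x')-\omega(y')|\simeq|x'-y'|$ with constants independent of $x',y'$.

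The two-sided distance estimate now follows from elementary exponential inequalities using the hypothesis $x_n\le y_n$, i.e.\ $r\ge s$. Writing $t=y_n-x_n\ge0$, we have $r-s=s(e^{\ka t}-1)=r(1-e^{-\ka t})$, and the convexity bounds $\ka t\le e^{\ka t}-1$ and $1-e^{-\ka t}\le\ka t$ give $\ka^2 s^2 t^2\le(r-s)^2\le\ka^2 r^2 t^2$. Combining these with $s^2\le rs\le r^2$, the angular estimate, and $|x-y|^2=|x'-y'|^2+t^2$ in the identity above yields
\[
C_1^2\,e^{-2\ka y_n}\,|x-y|^2\le|T(x)-T(y)|^2\le C_2^2\,e^{-2\ka x_n}\,|x-y|^2,
\]
which is exactly the claim, the lower bound carrying $e^{-\ka y_n}$ and the upper bound $e^{-\ka x_n}$.

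Finally, the \emph{in particular} statement is the infinitesimal counterpart. Differentiating $T(x)=e^{-\ka x_n}\omega(x')$ gives the radial derivative $\partial_{x_n}T=-\ka e^{-\ka x_n}\omega(x')$ and the tangential ones $\partial_{x_j}T=e^{-\ka x_n}\partial_{x_j}\omega(x')$; these are mutually orthogonal since $\partial_{x_j}\omega(x')\perp\omega(x')$, and $\omega$ is conformal with factor $2/(1+|x'|^2)\simeq1$ on $B'$. Hence the singular values of $dT(x)$ are $\ka e^{-\ka x_n}$ together with $n-1$ copies of $2e^{-\ka x_n}/(1+|x'|^2)$, all comparable to $e^{-\ka x_n}$; this immediately gives $|dT(x)q|\simeq e^{-\ka x_n}|q|$, the same for $dT^*(x)$ (equal singular values), and $|J_T(x)|=|\det dT(x)|\simeq e^{-\ka n x_n}$ as the product of the singular values. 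The only genuinely delicate point is obtaining sharp two-sided control of the radial difference $e^{-\ka x_n}-e^{-\ka y_n}$ simultaneously with the angular one; integrating $dT$ along a path would make the lower bound awkward, which is precisely why I would route everything through the exact Pythagorean identity, after which the estimates are elementary.
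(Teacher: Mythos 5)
Your proof is correct. Note first that the paper itself gives no argument for this statement: it is imported verbatim from Bj\"orn--Mwasa \cite[Lemma~3.3]{BM}, so there is no in-paper proof to compare against. Judged on its own, your argument is a complete and clean self-contained derivation. The decomposition $T(x)=e^{-\ka x_n}\omega(x')$ with $\omega$ the inverse stereographic projection, the exact identity
\[
|T(x)-T(y)|^2=(r-s)^2+rs\,|\omega(x')-\omega(y')|^2,
\qquad r=e^{-\ka x_n},\ s=e^{-\ka y_n},
\]
and the closed form $|\omega(x')-\omega(y')|^2=4|x'-y'|^2/(1+|x'|^2)(1+|y'|^2)$ all check out (the last by expanding $2-2\,\omega\cdot\eta$), and the bounds $1\le(1+|x'|^2)(1+|y'|^2)<4$ on $B'$, together with $\ka t\,s\le r-s\le \ka t\,r$ and $s^2\le rs\le r^2$, do yield the two-sided estimate with $C_1=\min(\ka,1)$ and $C_2=\max(\ka,2)$. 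The infinitesimal part is also sound: the columns of $dT(x)$ are mutually orthogonal (conformality of $\omega$ plus $\omega\perp\partial_j\omega$), so $dT^*(x)\,dT(x)$ is diagonal with entries $\ka^2e^{-2\ka x_n}$ and $n-1$ copies of $4e^{-2\ka x_n}/(1+|x'|^2)^2$, all comparable to $e^{-2\ka x_n}$ with constants depending only on $\ka$; the claims for $|dT(x)q|$, $|dT^*(x)q|$ and $|J_T(x)|$ follow at once. Your closing remark is also well taken: routing the lower bound through the exact Pythagorean identity rather than integrating $dT$ along a path is what makes the factor $e^{-\ka y_n}$ (rather than an infimum of the derivative along a curve) come out cleanly.
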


\section{Properties of \texorpdfstring{$\Div \B(\xi,\grad \ut)$}{Div B(x,grad u)} and removing the 
\penalty-10000 Neumann data}
\label{sect-pro-ope-B}
In order to apply the theory of degenerate elliptic equations developed 
in Heinonen--Kilpel\"ainen--Martio~\cite{HKM}, we need to first show that 
the mapping $\B$ in~\eqref{eq-div-B} satisfies ellipticity 
assumptions similar to \eqref{eq-ell-A}--\eqref{eq-monot-A}.

\begin{thm} \label{thm-B-ellipt-bdd}
The mapping $\B:T(\clG)\times\R^n\to\R^n$, defined by \eqref{eq-def-B}, 
satisfies 
for all $q\in\R^n$ and a.e.\ $\xi\in T(\clG)$
the following ellipticity and boundedness conditions
\[
\B(\xi,q)\cdot q \simge \wt(\xi)|q|^p  \quad   \text{and}  \quad
|\B(\xi,q)| \simle \wt(\xi)|q|^{p-1},
\]
where $\wt(\xi)=|\xi|^{p-n}$ is a weight function and the comparison constants 
in $\simge$ and $\simle$ are independent of $\xi$ and $q$.
\end{thm}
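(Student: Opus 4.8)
The plan is to reduce both inequalities to the ellipticity~\eqref{eq-ell-A} and boundedness~\eqref{eq-bdd-A} of the original mapping $\A$, and then to collect the resulting $x_n$-dependent factors using the estimates in Lemma~\ref{lem-JB}. Throughout I write $\xi=T(x)$ and abbreviate $\tilde q:=dT^*(x)q$, so that by the definition~\eqref{eq-def-B} we have $\B(\xi,q)=|J_T(x)|^{-1}dT(x)\A(x,\tilde q)$. The whole content of the theorem is that the three factors coming from $dT(x)$, $dT^*(x)$ and the Jacobian conspire to produce the single weight $|\xi|^{p-n}$, once we recall that $|\xi|=e^{-\ka x_n}$.

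For the ellipticity bound I would first rewrite the scalar product $\B(\xi,q)\cdot q$ by transposition. Since $q^*dT(x)=(dT^*(x)q)^*=\tilde q^*$, we get
\[
\B(\xi,q)\cdot q=|J_T(x)|^{-1}q^*dT(x)\A(x,\tilde q)=|J_T(x)|^{-1}\A(x,\tilde q)\cdot\tilde q.
\]
Applying~\eqref{eq-ell-A} to $\tilde q$ yields $\A(x,\tilde q)\cdot\tilde q\ge\al_1|\tilde q|^p$. By Lemma~\ref{lem-JB}, $|\tilde q|=|dT^*(x)q|\simeq e^{-\ka x_n}|q|$ and $|J_T(x)|^{-1}\simeq e^{\ka n x_n}$, so
\[
\B(\xi,q)\cdot q\simge e^{\ka n x_n}e^{-\ka p x_n}|q|^p=e^{\ka(n-p)x_n}|q|^p=|\xi|^{p-n}|q|^p=\wt(\xi)|q|^p,
\]
which is the first inequality. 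Note that here I only need the \emph{lower} bound on $|\tilde q|$ supplied by the two-sided estimate of Lemma~\ref{lem-JB}.

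For the boundedness bound I would estimate directly from~\eqref{eq-def-B}, namely $|\B(\xi,q)|=|J_T(x)|^{-1}|dT(x)\A(x,\tilde q)|$. The \emph{upper} estimate for the action of $dT(x)$ in Lemma~\ref{lem-JB} gives $|dT(x)v|\simle e^{-\ka x_n}|v|$ for every vector $v$, and~\eqref{eq-bdd-A} gives $|\A(x,\tilde q)|\le\al_2|\tilde q|^{p-1}\simle e^{-\ka(p-1)x_n}|q|^{p-1}$. Combining these with $|J_T(x)|^{-1}\simeq e^{\ka n x_n}$ yields
\[
|\B(\xi,q)|\simle e^{\ka n x_n}e^{-\ka x_n}e^{-\ka(p-1)x_n}|q|^{p-1}=e^{\ka(n-p)x_n}|q|^{p-1}=\wt(\xi)|q|^{p-1},
\]
as required.

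There is no genuine obstacle here beyond careful bookkeeping of the exponents. The only points demanding attention are the transposition step that converts $\B(\xi,q)\cdot q$ into $\A(x,\tilde q)\cdot\tilde q$ so that~\eqref{eq-ell-A} becomes directly applicable, and the observation that Lemma~\ref{lem-JB} provides \emph{two-sided} bounds, so that the lower bound on $|\tilde q|$ is used in the ellipticity direction while the upper bounds on $|dT(x)v|$ and $|\tilde q|$ are used in the boundedness direction.
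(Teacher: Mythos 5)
Your proof is correct and follows essentially the same route as the paper's: the transposition identity $\B(\xi,q)\cdot q=|J_T(x)|^{-1}\A(x,dT^*(x)q)\cdot(dT^*(x)q)$, followed by \eqref{eq-ell-A}, \eqref{eq-bdd-A} and the exponent bookkeeping from Lemma~\ref{lem-JB}. No differences worth noting.
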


\begin{proof}
From \eqref{eq-def-B}, we have that for all $q\in\R^n$ and a.e.\ 
$\xi=Tx\in T(\clG)$,
\begin{align*}
\B(\xi,q)\cdot q &=|J_T(x)|^{-1}dT(x)\A(x,dT^*(x)q)\cdot q\\
&=|J_T(x)|^{-1}\A(x,dT^*(x)q)\cdot (dT^*(x)q).
\end{align*}
Now applying \eqref{eq-ell-A} together with Lemma~\ref{lem-JB}, we get that
\[
\B(\xi,q)\cdot q \simge e^{\ka nx_n}|dT^*(x)q|^p\simeq e^{-\ka(p-n)x_n}|q|^p=\wt(\xi)|q|^p,
\]
which completes the proof of the first part. 
For the second part, we have using \eqref{eq-def-B}, \eqref{eq-bdd-A}
and Lemma~\ref{lem-JB} that 
\begin{align*}
|\B(\xi,q)|&=|J_T(x)|^{-1}|dT(x)\A(x,dT^*(x)q)|
\simeq |J_T(x)|^{-1}e^{-\ka x_n}|\A(x,dT^*(x)q)| \\
&\simle e^{(n-1)\ka x_n} |dT^*(x)q|^{p-1}
\simeq e^{-\ka(p-n)x_n}|q|^{p-1}= \wt(\xi)|q|^{p-1}.\qedhere
\end{align*}
\end{proof}

\begin{thm}\label{thm-monot}
The mapping $\B:T(\clG)\times\R^n\to\R^n$, defined by \eqref{eq-def-B}, 
satisfies for a.e.\  $\xi\in T(\clG)$ and all $q_1,q_2\in\R^n$ the monotonicity condition
\[
(\B(\xi,q_1)-\B(\xi,q_2))\cdot(q_1-q_2)>0 \quad\text{when }q_1\neq q_2.
\]
\end{thm}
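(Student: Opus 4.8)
The plan is to follow the same pattern as the proof of Theorem~\ref{thm-B-ellipt-bdd}: unfold the definition~\eqref{eq-def-B} of $\B$, transfer the matrix $dT(x)$ onto the second factor of the scalar product by means of its transpose, and then invoke the strict monotonicity~\eqref{eq-monot-A} of the original mapping $\A$.

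Concretely, I would fix $\xi=Tx\in T(\clG)$ and write $\tilde q_i = dT^*(x)q_i$ for $i=1,2$. Using~\eqref{eq-def-B} and the linearity of $dT(x)$,
\[
(\B(\xi,q_1)-\B(\xi,q_2))\cdot(q_1-q_2)
= |J_T(x)|^{-1}\bigl(dT(x)[\A(x,\tilde q_1)-\A(x,\tilde q_2)]\bigr)\cdot(q_1-q_2).
\]
The next step is to move $dT(x)$ across the scalar product via the adjoint relation $(dT(x)u)\cdot v = u\cdot(dT^*(x)v)$, exactly as in the computation in Lemma~\ref{lem-int-id-Ball}. This rewrites the right-hand side as
\[
|J_T(x)|^{-1}\,(\A(x,\tilde q_1)-\A(x,\tilde q_2))\cdot(\tilde q_1-\tilde q_2).
\]

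To conclude, I would observe two facts. First, $|J_T(x)|^{-1}>0$, since $T$ is a smooth diffeomorphism onto its image and hence its Jacobian never vanishes. Second, and this is the only point requiring slight care, one needs $\tilde q_1\neq\tilde q_2$ whenever $q_1\neq q_2$; this holds because $dT(x)$ is invertible (again by the diffeomorphism property), so that $dT^*(x)$ is invertible as well and the linear map $q\mapsto dT^*(x)q$ is injective. Applying the strict monotonicity~\eqref{eq-monot-A} of $\A$ to the pair $\tilde q_1\neq\tilde q_2$ then yields strict positivity of the displayed quantity, which is the desired conclusion.

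The argument is essentially a transcription of the ellipticity estimate, so I do not anticipate a genuine obstacle. The only place where the reasoning is not purely formal is the preservation of the condition $q_1\neq q_2$ under the transpose differential: one must use that $dT^*(x)$ is \emph{invertible}, not merely nonzero, so that distinct arguments remain distinct and the strict inequality in~\eqref{eq-monot-A} stays available.
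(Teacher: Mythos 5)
Your proposal is correct and follows essentially the same route as the paper's own proof: unfold \eqref{eq-def-B}, transfer $dT(x)$ across the scalar product to obtain $|J_T(x)|^{-1}(\A(x,dT^*(x)q_1)-\A(x,dT^*(x)q_2))\cdot(dT^*(x)q_1-dT^*(x)q_2)$, and conclude by \eqref{eq-monot-A} together with the positivity of $|J_T(x)|^{-1}$ and the invertibility of $dT^*(x)$. The point you flag as requiring care --- that distinct $q_1,q_2$ give distinct $dT^*(x)q_i$ --- is exactly the closing observation in the paper's argument.
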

\begin{proof}
Using \eqref{eq-def-B}, we have for all $q_1,q_2\in\R^n$,
\begin{align*}\label{eq-monot}
&(\B(\xi,q_1)-\B(\xi,q_2))\cdot(q_1-q_2)\\
&\qquad=|J_T(x)|^{-1}((dT(x)\A(x,dT^*(x)q_1)-dT(x)\A(x,dT^*(x)q_2))\cdot(q_1-q_2))\\
&\qquad=|J_T(x)|^{-1}((\A(x,dT^*(x)q_1)-\A(x,dT^*(x)q_2))\cdot(dT^*(x)q_1-dT^*(x)q_2)).
\end{align*}
Since $|J_T(x)|^{-1}>0$, we have by~\eqref{eq-monot-A} that the last expression 
is always nonnegative.
Moreover, it is zero if and only if $dT^*(x)q_1=dT^*(x)q_2$, implying that 
$q_1=q_2$ as $dT^*(x)$ is invertible.
\end{proof}

It is clear from Theorems~\ref{thm-B-ellipt-bdd} and~\ref{thm-monot},
together with the homogeneous condition~\eqref{eq-homog-A},  that the assumptions (3.4)--(3.7) 
in Heinonen--Kilpel\"ainen--Martio~\cite{HKM} are satisfied for $\B$ 
with the weight function 
$\wt(\xi)=|\xi|^{p-n}$, $\xi\in\R^n\setm\{0\}$. 
Moreover, $\B(\xi,q)$ is measurable in $\xi$ and continuous in $q$. 

The weight $\wt(\xi)$ belongs to the Muckenhoupt $A_p$ class and 
the associated measure $d\mu(\xi)=\wt(\xi)\,d\xi$ is doubling and 
supports a \p-Poincar\'e inequality on $\R^n$. 
Such weights are suitable for the study of partial differential equations 
and Sobolev spaces, see 
\cite[Chapters~15 and~20]{HKM} for a detailed exposition.
We follow \cite[Chapters~1 and~3]{HKM} giving the following definitions.

\begin{defi}
For an open set $\Om\subset\R^n$, the weighted Sobolev space 
$\Hp_0(\Om,\wt)$ is the completion of $C_0^\infty(\Om)$ with respect to the norm
\[
\|u\|_{\Hp(\Om,\wt)}:=\biggl(\int_{\Om} \bigl( |u(\xi)|^p
           +|\nabla u(\xi)|^p \bigr) \wt(\xi)\,d\xi \biggr)^{1/p}.
\]
Similarly, $H^{1,p}(\Om,\wt)$ is the completion of the set
\[
\{\phi\in C^{\infty}(\Om): \|\phi\|_{\Hp(\Om,\wt)} <\infty\}
\] 
in the $\Hp(\Om,\wt)$-norm.
\end{defi}

If the weight $\wt\equiv1$, then the symbol $\wt$ is dropped 
and in this case we have the usual Sobolev space $\Wp(\Om)$.

\begin{defi}
A function $u\in\Hploc(\Om,\wt)$ in an open set $\Om\subset\R^n$ is said to be 
a \emph{weak solution} of the equation $\Div\B(\xi,\grad u)=0$ 
if for all functions $\phi\in C_0^\infty(\Om)$, the following 
integral identity holds
	\begin{equation}    \label{int-B}
	\int_{\Om}\B(\xi,\grad u)\cdot\grad\phi\,d\xi=0.
	\end{equation}
	\end{defi}

To use the tools developed in Heinonen--Kilpel\"ainen--Martio~\cite{HKM} 
for Dirichlet problems, the part of the boundary, $T(\bdy G\setm F)$, 
where the zero conormal derivative is prescribed, will be removed.
This will be done by a reflection in the hyperplane 
\[
\{\xi\in\R^n:\xi_n=0\}.
\]
By a reflection, we mean the  mapping $P:\R^n\to\R^n$ defined by  
\[
P\xi=P(\xi',\xi_n)=(\xi',-\xi_n).
\]
Let $D$ be the open set consisting of $T(G\setm F)$ together with 
$PT(G\setm F)$ and $T(\bdy G\setm F)$, i.e.
\[
D=B(0,1)\setm \Ft, \quad\text{where } \Ft=T(F)\cup PT(F)\cup\{0\}.
\]
The point at infinity in $G$ corresponds to the origin $\xi=0$ in $B(0,1)$.
Note that $\Ft$ is closed and that the base $B'\times\{0\}$ of $G$ 
is mapped onto the upper unit half-sphere $\{\xi\in\bdy B(0,1):\xi_n>0\}$. 
By assumption, the base $B'\times\{0\}\subset F$ and so the whole boundary 
$\bdy D\subset\Ft$ carries the Dirichlet condition.

Extend $\B(\xi,q)$ from $T(\clG)$ to the reflected half-ball $PT(\clG)$ by  
\begin{equation} \label{eq-TG--PTG}
\B(\xi,q)= \begin{cases} 
P\B(P\xi,Pq) & \text{if }\xi_n<0,\\
0 & \text{if } \xi=0.\end{cases}
\end{equation}
The standard ellipticity and monotonicity assumptions for $\B$ still hold  
after the extension  from $T(\clG)$ to $PT(\clG)$. 

With this reflection, we will now be able to eliminate the Neumann boundary data 
on $T(\bdy G\setm F)$  so that only the Dirichlet data on $\bdy D$ remain.
In Theorem~\ref{thm-G-TG}, we will show that $u\in\Wploc(\clG\setm F)$ 
is a weak solution of the equation \eqref{eq-divA} in $G\setm F$ with 
zero conormal derivative on $\bdy G\setm F$ if and only if 
the symmetric reflection of $u\circ T^{-1}$
is a weak solution of \eqref{eq-div-B} in $D$.
We shall use the function spaces identified in Bj\"orn--Mwasa~\cite{BM}.

\begin{lem}         \label{lem-intG-T}
{\rm (\cite[Lemma~4.6]{BM})}
Assume that $u\in L^1\loc(U)$ with the distributional
gradient $\grad u \in L^1\loc(U)$ for some open set 
$U\subset B'(0,R)\times\R$ and let $\ut=u\circ T^{-1}$.  
Then for any measurable set $A\subset U$,
\begin{align*}   
\int_{A}|\grad u|^p\,dx 
  &\simeq \int_{T(A)}|\grad \ut|^p \wt \,d\xi, \\
\int_{A}|u|^p e^{-p\ka x_n}\,dx
  &\simeq \int_{T(A)}|\ut|^p \wt 
\,d\xi, \nonumber 
\end{align*}
with comparison constants depending on $R$ but independent of $A$ and $u$.
\end{lem}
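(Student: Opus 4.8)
The plan is to reduce both equivalences to the pointwise change of variables $\xi=T(x)$ combined with the estimates in Lemma~\ref{lem-JB}, rewritten using the identity $|\xi|=|T(x)|=e^{-\ka x_n}$. Since $T$ is a smooth diffeomorphism, the change of variables formula gives $dx=|J_T(x)|^{-1}\,d\xi$ on $T(A)$, so the entire task is to track how each factor in the two integrands transforms, and then to read off $\wt(\xi)=|\xi|^{p-n}=e^{\ka(n-p)x_n}$.

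For the gradient equivalence, I would first invoke the chain rule $\grad u(x)=dT^*(x)\grad\ut(\xi)$ with $\xi=T(x)$. Because $u$ has only a distributional gradient in $L^1\loc$, this identity must be understood in the distributional sense; this is standard for composition with a smooth diffeomorphism and is the one genuinely technical point. Once it is in place, Lemma~\ref{lem-JB} (applied on $B'(0,R)\times\R$, which is where the $R$-dependence of the constants enters) yields $|\grad u(x)|=|dT^*(x)\grad\ut(\xi)|\simeq e^{-\ka x_n}|\grad\ut(\xi)|$ and $|J_T(x)|^{-1}\simeq e^{\ka n x_n}$. Raising the first estimate to the power $p$ and multiplying by $|J_T(x)|^{-1}$ produces an integrand comparable to $e^{-p\ka x_n}e^{\ka n x_n}|\grad\ut|^p=e^{\ka(n-p)x_n}|\grad\ut|^p=\wt(\xi)|\grad\ut|^p$, which gives the first claim after integrating over $T(A)$.

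For the second equivalence the argument is even shorter, as no derivatives are involved: $|u(x)|=|\ut(\xi)|$ directly from $\ut=u\circ T^{-1}$, and the remaining weight factor transforms, again by Lemma~\ref{lem-JB}, via $e^{-p\ka x_n}|J_T(x)|^{-1}\simeq e^{-p\ka x_n}e^{\ka n x_n}=e^{\ka(n-p)x_n}=\wt(\xi)$. Hence $|u|^p e^{-p\ka x_n}\,dx$ transforms into a quantity comparable to $|\ut|^p\wt\,d\xi$.

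The main obstacle is not any single estimate but two bookkeeping points. First, one must verify that the chain rule and the change of variables remain valid for merely $L^1\loc$-Sobolev functions and for an \emph{arbitrary} measurable set $A$, rather than only for smooth functions on nice domains; a routine approximation argument, smoothing $u$ on relatively compact subsets and passing to the limit, settles this. Second, one must check that the comparison constants coming from Lemma~\ref{lem-JB} stay uniform over the enlarged cross-section $B'(0,R)$; inspecting the explicit form of $dT$ shows that the comparability degenerates only as $|x'|\to\infty$, so restricting to $|x'|<R$ keeps the constants finite, forcing them to depend on $R$ but leaving them independent of $A$ and $u$, exactly as stated.
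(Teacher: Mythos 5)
The paper does not prove this lemma itself --- it is imported verbatim from \cite[Lemma~4.6]{BM} --- but your argument is exactly the change-of-variables computation that establishes it there: the chain rule $\grad u(x)=dT^*(x)\grad\ut(\xi)$, the Jacobian factor $|J_T(x)|^{-1}\simeq e^{\ka n x_n}$ from Lemma~\ref{lem-JB}, and the identification $e^{-p\ka x_n}e^{\ka n x_n}=e^{\ka(n-p)x_n}=|\xi|^{p-n}=\wt(\xi)$ all check out for both displayed equivalences. The two points you flag --- validity of the chain rule and change of variables for functions with merely $L^1\loc$ distributional gradients under a smooth diffeomorphism, and the $R$-dependence of the constants caused by the degeneration of $dT$ as $|x'|\to\infty$ --- are precisely the right technical issues, so your proposal is correct and follows the same route as the cited source.
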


\begin{prop}  \label{prop-Wploc-Hploc}
Let $u\in \Wploc(\clG\setm F)$. Then the function
\begin{equation}  		\label{eq-def--ut}
\ut(\xi',\xi_n)= \begin{cases}
        (u\circ T^{-1})(\xi',\xi_n)    &\text{if } \xi\in T(\clG\setm F),\\ 
        (u\circ T^{-1})(\xi',-\xi_n) &\text{if } \xi\in PT(G\setm F),
        \end{cases}
\end{equation}
belongs to $\Hploc(D,\wt)$.
Conversely, if $\vt\in\Hploc(D,\wt)$, then $\vt\circ T\in\Wploc(\clG\setm F)$.
\end{prop}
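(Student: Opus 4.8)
The plan is to reduce the proposition to the two norm equivalences already provided in Lemma~\ref{lem-intG-T}, and to handle the reflection by a symmetry argument across the hyperplane $\{\xi_n=0\}$. First I would fix an arbitrary $\Om'\Subset D$ and show that $\ut\in\Hp(\Om',\wt)$, which by definition of $\Hploc$ suffices. The key observation is that $\ut$ as defined in \eqref{eq-def--ut} is symmetric under the reflection $P$, i.e.\ $\ut(P\xi)=\ut(\xi)$, so it is enough to control the $\Hp(\,\cdot\,,\wt)$-norm on the upper half $\Om'\cap\{\xi_n>0\}\subset T(G\setm F)$ and then double. On that upper piece, $\ut=u\circ T^{-1}$ with $u\in\Wploc(\clG\setm F)$, so I would choose an open $U\subset B'\times\R$ with $T(U)$ covering $\Om'\cap\{\xi_n\ge0\}$ and $\itoverline{U}$ a compact subset of $\clG\setm F$; then Lemma~\ref{lem-intG-T} gives
\[
\int_{\Om'\cap\{\xi_n>0\}}\bigl(|\ut|^p+|\grad\ut|^p\bigr)\wt\,d\xi
\simeq \int_{U}\bigl(|u|^pe^{-p\ka x_n}+|\grad u|^p\bigr)\,dx<\infty,
\]
the finiteness coming from $u\in\Wp$ on the relatively compact set $U$. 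Summing the two halves yields $\ut\in\Hp(\Om',\wt)$.

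The step that needs genuine care, and which I expect to be the main obstacle, is verifying that the distributional gradient of $\ut$ behaves correctly across the reflection hyperplane, so that the piecewise definition in \eqref{eq-def--ut} really does define a single weighted Sobolev function on all of $D$ (rather than two pieces that fail to match up in the $\Hp$ sense along $\{\xi_n=0\}$). Concretely, I would show that the even reflection of a $\Wp$-function across a hyperplane is again $\Wp$, with the reflected gradient obtained by reflecting the normal component: if $g(\xi',\xi_n):=h(\xi',-\xi_n)$ for $\xi_n<0$, then $\grad g(\xi',\xi_n)=P\,\grad h(\xi',-\xi_n)$, and no singular part is created on $\{\xi_n=0\}$ precisely because the two traces agree there (even reflection). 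Since the weight $\wt(\xi)=|\xi|^{p-n}$ is itself invariant under $P$ and is locally integrable away from the origin (and $\Om'\Subset D$ stays away from $0$), this matching argument transfers verbatim to the weighted setting. One clean way to make this rigorous is to test against $\phi\in C_0^\infty(\Om')$, split the integral $\int_{\Om'}\ut\,\bdy_i\phi\,d\xi$ into the upper and lower halves, integrate by parts on each half using the $\Wp$-regularity of $u\circ T^{-1}$, and check that the boundary terms on $\{\xi_n=0\}$ cancel by the symmetry of $\ut$; this identifies $\grad\ut$ as the claimed $P$-reflected gradient with no distributional mass concentrated on the hyperplane.

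For the converse, suppose $\vt\in\Hploc(D,\wt)$. I would restrict $\vt$ to the open upper half $T(G)\subset D$ and pull it back by $T$: setting $v:=\vt\circ T$ on $G$, the same local equivalence from Lemma~\ref{lem-intG-T}, read in the other direction, gives for every $U\Subset \clG\setm F$ (so that $T(U)\Subset D\cap\{\xi_n\ge0\}$) that
\[
\int_{U}\bigl(|v|^pe^{-p\ka x_n}+|\grad v|^p\bigr)\,dx
\simeq \int_{T(U)}\bigl(|\vt|^p+|\grad\vt|^p\bigr)\wt\,d\xi<\infty .
\]
Since $e^{-p\ka x_n}$ is bounded below by a positive constant on the relatively compact set $U$, finiteness of $\int_U|v|^pe^{-p\ka x_n}\,dx$ forces $\int_U|v|^p\,dx<\infty$, and together with $\int_U|\grad v|^p\,dx<\infty$ this yields $v\in\Wp(U)$. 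As $U$ ranges over relatively compact subsets of $\clG\setm F$ we conclude $v=\vt\circ T\in\Wploc(\clG\setm F)$, which completes the proof.
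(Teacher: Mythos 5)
Your argument is correct, but it takes a genuinely different route from the paper's. Because $\Hp(\Om,\wt)$ is defined as a completion of smooth functions, the paper never works with the distributional gradient of $\ut$ at all: it picks smooth approximations $u_j\to u$ in $\Wp(T^{-1}(B))$ (available from the definition of $\Wp$), pushes them forward to Lipschitz functions $\ut_j=u_j\circ T^{-1}$ on $B\cap T(\clG)$ (Lipschitz by Lemma~\ref{lem-JB}), extends them by even reflection (still Lipschitz), and then uses Lemma~\ref{lem-intG-T} to get $\|\ut_j-\ut\|_{\Hp(B,\wt)}\simle\|u_j-u\|_{\Wp(T^{-1}(B))}\to0$, so membership in the completion is exhibited directly. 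You instead verify that $\ut$ has finite weighted norm and a genuine distributional gradient across $\{\xi_n=0\}$ by the even-reflection gluing; that gluing is sound, since the traces from the two sides agree by construction and $u\circ T^{-1}$ is Sobolev up to the hyperplane because $u$ extends to $\Wploc(\R^n\setm F)$ across the lateral boundary of the cylinder. The one step you should make explicit is why ``finite weighted norm plus distributional gradient'' yields membership in the completion $\Hp(\Om',\wt)$: this is not automatic for weighted Sobolev spaces in general (the $H=W$ question), but here $\overline{\Om'}\subset D$ is compact and avoids the origin, so $\wt\simeq1$ on $\Om'$ and $\Hp(\Om',\wt)$ coincides with the unweighted $\Wp(\Om')$, where Meyers--Serrin applies. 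Your remark that $\wt$ is ``locally integrable away from the origin'' is weaker than the two-sided bound actually needed. With that point spelled out your proof is complete; the paper's approximation route reaches the same conclusion without ever touching the $H$ versus $W$ issue. The converse direction is essentially identical in the two proofs.
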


\begin{proof}
By the definition of $\Wploc(\clG\setm F)$, we can assume that $u\in\Wploc(\R^n\setm F)$.
Then $u\in\Wp(U)$ for every $U\Subset\R^n\setm F$.
To show that $\ut\in\Hp\loc(D,\wt)$, let $B\Subset D$ be a ball. 
Assume without loss of generality that $B$ is centred in $T(\clG\setm F)$ and set $U:=T^{-1}(B)$.  
Choose a sequence $u_j\in C^\infty(\R^n\setm F)$ such that $u_j\to u$ in $\Wp(U)$.
In particular, $u_j$ are Lipschitz on $U$.
Define $\ut_j:=u_j\circ T^{-1}$ restricted to $B\cap T(\clG)$.

By Lemma~\ref{lem-JB}, the functions $\ut_j$ are Lipschitz.
Their extensions across the hyperplane $\xi_n=0$ to the whole $B$, 
by the reflection $\ut_j(\xi)=\ut_j(P\xi)$,  are still Lipschitz.
Lemma~\ref{lem-intG-T} implies that $\ut$ can be approximated 
in the $\Hp(B,\wt)$-norm by these Lipschitz extensions, that is
\[
\|\ut_j-\ut\|_{\Hp(B,\wt)}\simle \|u_j-u\|_{\Wp(U)}\to0\quad \text{as }j\to\infty.
\]
Hence, $\ut\in\Hp(B, \wt)$. 
Since $B$ was arbitrary, we have that $\ut\in\Hploc(D,\wt)$.

Conversely, let $V\Subset\R^n\setm F$. Then $T(V)\Subset D$ and hence $\vt\in\Hp(T(V),\wt)$.
By Lemma~\ref{lem-intG-T} and the fact that $e^{-p\ka x_n }\simeq1$ on $V$, we have that 
$\vt\circ T$ belongs to $\Wp(V)$.
Since $V$ was arbitrary, we get that
$\vt\circ T\in\Wploc(\clG\setm F)$. 
\end{proof}

\begin{thm}				\label{thm-G-TG}
Let $u\in\Wploc(\clG\setm F)$ be a weak solution of the equation 
\begin{equation}   \label{eq-DivA-G-F}
\Div\A(x,\grad u)=0
\quad \text{in $G\setm F$}
\end{equation}
 with zero conormal derivative on 
$\bdy G\setm F$, that is, the integral identity \eqref{eq-weak-sol-divA} 
holds for all $\phi\in C_0^\infty(\R^n\setm F)$.
Let $\ut$ be as in \eqref{eq-def--ut}. 
Then $\ut\in\Hploc(D,\wt)$ is a weak solution of the equation
\begin{equation}   \label{eq-DivB}
\Div\B(\xi,\grad \ut)=0\quad\text{in }D.
\end{equation}
Conversely, if $\ub\in\Hploc(D,\wt)$ is a weak solution of~\eqref{eq-DivB} 
such that $\ub=\ub\circ P$, then $\ub\circ T\in\Wploc(\clG\setm F)$,
with $\ub$ restricted to $T(\clG\setm F)$,
is a weak solution of the equation~\eqref{eq-DivA-G-F}  
with zero conormal derivative on $\bdy G\setm F$.
\end{thm}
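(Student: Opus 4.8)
The plan is to reduce both implications to a single symmetrization identity: for the symmetric function $\ut$ (respectively $\ub$) and any test function $\psi\in C_0^\infty(D)$, one has
\[
\int_D \B(\xi,\grad\ut)\cdot\grad\psi\,d\xi
= 2\int_{G\setm F}\A(x,\grad u)\cdot\grad(\psi_{\mathrm s}\circ T)\,dx,
\]
where $\psi_{\mathrm s}(\xi)=\tfrac12(\psi(\xi)+\psi(P\xi))$ is the symmetric part of $\psi$ and $u=\ut\circ T$. Once this identity is in place the equivalence is immediate: $\ut$ is a weak solution of \eqref{eq-DivB} in $D$ exactly when the left-hand side vanishes for all $\psi$, which happens exactly when the right-hand side vanishes for all $\psi$.

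The core of the argument is the symmetrization. Writing $\psi=\psi_{\mathrm s}+\psi_{\mathrm a}$ with $\psi_{\mathrm a}(\xi)=\tfrac12(\psi(\xi)-\psi(P\xi))$, I would first show that the antisymmetric part contributes nothing. Since $\ut$ is symmetric, $\ut=\ut\circ P$, one obtains the gradient identities $\grad\ut(P\eta)=P\grad\ut(\eta)$ and $\grad\psi_{\mathrm a}(P\eta)=-P\grad\psi_{\mathrm a}(\eta)$ for $\eta$ in the upper half-ball, while $\grad\psi_{\mathrm s}(P\eta)=P\grad\psi_{\mathrm s}(\eta)$. Substituting $\xi=P\eta$ in the lower half-ball and using the reflection rule \eqref{eq-TG--PTG}, i.e.\ $\B(P\eta,q)=P\B(\eta,Pq)$, together with $P^2=I$ and $(Pv)\cdot(Pw)=v\cdot w$, the lower-half integrand for $\psi_{\mathrm a}$ becomes the negative of the upper-half one, so the two cancel; for $\psi_{\mathrm s}$ they coincide, giving the factor $2$ and reducing the integral to twice the integral over the upper half-ball $\{\xi\in D:\xi_n>0\}=T(G\setm F)$. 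Applying Lemma~\ref{lem-int-id-Ball} to this upper-half integral, with $v=\psi_{\mathrm s}\circ T$, then yields the displayed identity.

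With the identity established, the forward direction follows because $\psi_{\mathrm s}$ is smooth and symmetric, so $v=\psi_{\mathrm s}\circ T$ is the restriction to $\clG\setm F$ of an admissible test function in $C_0^\infty(\R^n\setm F)$; since $u$ is a weak solution with zero conormal derivative, the right-hand side vanishes for every $\psi$, whence $\ut$ solves \eqref{eq-DivB}. For the converse, given $\phi\in C_0^\infty(\R^n\setm F)$ I would set $\tilde\phi=\phi\circ T^{-1}$ on the upper half-ball and let $\psi$ be its symmetric extension across $\{\xi_n=0\}$; then $\psi=\psi_{\mathrm s}$ and $\psi\circ T=\phi$ on $G\setm F$. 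The function $\psi$ is Lipschitz with compact support in $D$, hence lies in $\Hp_0(D,\wt)$ by Lemma~\ref{lem-intG-T}, and the weak formulation \eqref{int-B} for $\ub$ extends from $C_0^\infty(D)$ to such $\psi$ by density, the relevant pairing being continuous on $\Hp_0(D,\wt)$ since $|\B(\xi,\grad\ub)|\simle\wt|\grad\ub|^{p-1}$ by Theorem~\ref{thm-B-ellipt-bdd}. Reading the identity backwards now gives $0=\int_D\B(\xi,\grad\ub)\cdot\grad\psi\,d\xi=2\int_{G\setm F}\A(x,\grad(\ub\circ T))\cdot\grad\phi\,dx$, as required; that $\ub\circ T\in\Wploc(\clG\setm F)$ is Proposition~\ref{prop-Wploc-Hploc}.

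I expect the main obstacle to be the careful matching of admissible test functions across $T$ and the reflection $P$, rather than the algebra of the reflection identities. Two points need care: first, that the symmetric extension $\psi$ of a pulled-back test function genuinely belongs to $\Hp_0(D,\wt)$ even though its normal derivative may jump across $\{\xi_n=0\}$, which is exactly where Lemma~\ref{lem-intG-T} and the density of Lipschitz functions are used; and second, the mild technicality that $T(\R^n)$ omits the ray $\{\xi'=0,\ \xi_n\le0\}$, so in the forward direction one must ensure, via a cutoff, that $\psi_{\mathrm s}\circ T$ extends to a genuine element of $C_0^\infty(\R^n\setm F)$ whose support avoids both $F$ and the point at infinity. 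Both are resolved using Proposition~\ref{prop-Wploc-Hploc}, Lemma~\ref{lem-intG-T}, and the fact that the Neumann boundary $\bdy G\setm F$ is carried by $T$ into the reflection hyperplane $\{\xi_n=0\}$.
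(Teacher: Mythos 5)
Your proof is correct and follows essentially the same route as the paper: both directions hinge on Lemma~\ref{lem-int-id-Ball}, the change of variables $\zeta=P\xi$ with the reflection rule \eqref{eq-TG--PTG}, Proposition~\ref{prop-Wploc-Hploc} for the function spaces, and the extension of the weak formulation to compactly supported test functions in $\Hp_0(D,\wt)$ for the converse. Your symmetric/antisymmetric decomposition of $\psi$ is just a repackaging of the paper's step of adding the upper-half integral for $\phib$ to the one for $\phib\circ P$, and your flagged technicalities (the cutoff near the deleted ray, membership of the reflected test function in $\Hp_0(D,\wt)$) are exactly the points the paper handles via the cited lemmas.
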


\begin{proof} 
The fact that $\ut\in\Hploc(D,\wt)$ follows from Proposition~\ref{prop-Wploc-Hploc}.
To prove the first implication, let $\phib\in C_0^\infty(D)$ be an arbitrary 
test function.
Clearly, $\phib\circ T|_{\clG\setm F}$ is a restriction of a function from
$C_0^\infty(\R^n\setm F)$.
In Lemma~\ref{lem-int-id-Ball}, replace $v,\vt$ and $A$ with 
$\phib\circ T, \phib$ and $G\setm F$, respectively.
Lemma~\ref{lem-int-id-Ball} 
and the integral identity \eqref{eq-weak-sol-divA} then give 
\begin{equation}  		\label{eq-int-TG=G=0}
\int_{T(G\setm F)} \B(\xi,\grad\ut)\cdot\grad \phib \,d\xi 
=\int_{G\setm F}\A(x,\grad u)\cdot\grad(\phib\circ T)\,dx= 0.
\end{equation}
Now the change of variables $\zeta=P\xi$, together with \eqref{eq-TG--PTG} 
and the fact that $\ut=\ut\circ P$, 
yields
\begin{equation}  		\label{eq-int-PTG=TG}
\int_{PT(G\setm F)} \B(\xi,\grad\ut)\cdot\grad\phib \,d\xi 
= \int_{T(G\setm F)} \B(\xi,\grad \ut)\cdot\grad (\phib\circ P) \,d\xi,
\end{equation}
cf.\ \cite[Lemma~6.1]{BM}.
Since $\phib\circ P \in C_0^\infty(D)$, we see as 
in \eqref{eq-int-TG=G=0} that the last integral is zero.
Adding the left-hand sides of \eqref{eq-int-TG=G=0} and \eqref{eq-int-PTG=TG} 
shows that $\ut$ is a weak solution of \eqref{eq-DivB}.
	
Conversely, first we recall from \cite[Lemma~3.11]{HKM}
that if $\ub$ is a weak solution of \eqref{eq-DivB}, 
then the integral identity \eqref{int-B} holds for all test functions 
in $\Hp_0(D,\wt)$ with compact support in $D$.
Let $\phi\in C_0^\infty(\R^n\setm F)$ be an arbitrary test function.
Define 
\begin{equation*} 
\phit(\xi',\xi_n):= \begin{cases}
(\phi\circ T^{-1})(\xi',\xi_n)    &\text{if } \xi\in T(\clG),\\ 
(\phi\circ T^{-1})(\xi',-\xi_n) &\text{if } \xi\in PT(G),\\
0 &\text{otherwise.} \end{cases}
\end{equation*}
Note that $\phit=\phit\circ P$ and that $\phit$ 
has compact support in $D$.
Proposition~\ref{prop-Wploc-Hploc} therefore implies that 
$\phit\in\Hp_0(D,\wt)$.
Since $\ub=\ub\circ P$, the integral identity~\eqref{int-B}, 
tested with $\phit$, gives
\begin{equation*} 
\int_{T(G\setm F)} \B(\xi,\grad\ub)\cdot\grad \phit \,d\xi
    +\int_{PT(G\setm F)} \B(\xi,\grad\ub)\cdot\grad \phit \,d\xi
=\int_D\B(\xi,\grad\ub)\cdot\grad\phit\,d\xi=0. 
\end{equation*}
	
Observe that the integrals on the left-hand side are the same as 
in \eqref{eq-int-PTG=TG} with $\ut,\phib$ replaced by $\ub,\phit$ 
and are thus equal.
It follows that
\[
\int_{T(G\setm F)} \B(\xi,\grad\ub)\cdot\grad \phit \,d\xi=0.
\]

Replacing $u,v,\vt$ and $A$ in Lemma~\ref{lem-int-id-Ball} by 
$\ub\circ T,\phi,\phit$ and $G\setm F$, respectively, we then get
\[
\int_{G\setm F} \A(x,\grad(\ub\circ T))\cdot\grad \phi \,dx
=\int_{T(G\setm F)} \B(\xi,\grad\ub)\cdot\grad \phit \,d\xi=0.
\]
Since $\phi$ was chosen arbitrarily,  we conclude that $\ub\circ T$ is 
a weak solution of \eqref{eq-DivA-G-F} with zero conormal derivative,
as in Definition~\ref{defi-weak-divA}.
Lastly, Proposition~\ref{prop-Wploc-Hploc} shows that 
$\ub\circ T\in\Wploc(\clG\setm F)$.
\end{proof}

\begin{remark}  \label{rem-cont-sol}
The weak solution $\ut$ can be modified on a set of measure zero, so
that it becomes H\"older continuous  in $D$, 
see Heinonen--Kilpel\"ainen--Martio~\cite[Theorems~3.70 and~6.6]{HKM}. 
Hence, for the corresponding continuous representative of $u$,
the limit 
\[
\lim_{G\setm F\in x\to x_0}u(x)
\]
exists and is finite  
for every $x_0$ on the Neumann boundary $\bdy G\setm F$.
Moreover, $u$ is H\"older continuous at $x_0$, by Lemma~\ref{lem-JB}.
\end{remark}

\section{Existence and uniqueness of  the solutions}
\label{sect-existence}

In this section, we shall prove the existence and uniqueness of weak solutions 
to the equation $\Div\A(x,\grad u)=0$ in $G\setm F$ with zero conormal derivative on $\bdy G\setm F$
and continuous Dirichlet data.
We shall also show that the solution attains its continuous Dirichlet data
except possibly on a set of Sobolev $C_p$-capacity zero.
Recall that $F$ is a closed subset of $\clG$, which contains the base 
$B'\times\{0\}$, and that the Dirichlet boundary data $f$ are prescribed 
on $F_0:=F\cap\bdy(G\setm F)$.

\begin{defi}
Let $K\subset\R^n$ be a compact set. The Sobolev $(p,\wt)$-capacity of $K$ is 
\begin{equation*}
C_{p,\wt}(K)=\inf_v\int_{\R^n}(|v|^p+|\grad v|^p)\wt\,dx,
\end{equation*}
where the infimum is taken over all $v\in C_0^\infty(\R^n)$ 
(or equivalently, all continuous $v\in\Hp_0(\R^n,\wt)$)
such that $v\ge 1$ on $K$, see 
Heinonen--Kilpel\"ainen--Martio~\cite[Section~2.35 and~Lemma~2.36]{HKM}.
\end{defi}

The capacity $C_{p,\wt}$ can be extended to general sets as a
Choquet capacity, see \cite[Chapter~2]{HKM}. 
In particular, for all Borel sets $E\subset\R^n$,
\begin{equation}	\label{eq-Cp-choq}
C_{p,\wt}(E)=\sup\{C_{p,\wt}(K):K\subset E\text{ compact}\}.
\end{equation}
We also say that a property holds \emph{$C_{p,\wt}$-quasieverywhere}
if the set where it fails has zero $C_{p,\wt}$-capacity.
If $\wt\equiv1$, then we have the usual Sobolev $C_p$-capacity.
The following lemma shows that $T$ preserves sets of zero capacity. 

\begin{lem} \label{lem-Cpw=Cp=0}
Let $E\subset \clG$.  
Then $C_p(E)=0$ if and only if $C_{p,\wt}(T(E))=0$.
\end{lem}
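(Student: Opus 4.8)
The plan is to transfer each capacity through the diffeomorphism $T$ by means of Lemma~\ref{lem-intG-T}, which already matches the weighted energy in the $\xi$-variable with the unweighted energy in the $x$-variable. Since sets of capacity zero are characterized through their compact subsets, I would first reduce to compact sets. By the Choquet property \eqref{eq-Cp-choq} (applied both to $C_{p,\wt}$ and, in the special case $\wt\equiv1$, to $C_p$) one has $C_p(E)=0$ if and only if $C_p(K)=0$ for every compact $K\subset E$, and likewise $C_{p,\wt}(T(E))=0$ if and only if $C_{p,\wt}(K')=0$ for every compact $K'\subset T(E)$. As $T$ is a homeomorphism onto its image, the compact subsets of $T(E)$ are exactly the sets $T(K)$ with $K\subset E$ compact. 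Hence it suffices to prove that
\[
C_p(K)=0 \quad\Longleftrightarrow\quad C_{p,\wt}(T(K))=0 \qquad\text{for every compact } K\subset\clG.
\]
Such a $K$ is bounded, so $K\subset\{x\in\clG:x_n\le M\}$ for some $M<\infty$, and consequently $T(K)\subset\{\xi: e^{-\ka M}\le|\xi|\le1\}$ is compact and bounded away from the origin (the image of infinity) and from the slit $\{\xi'=0,\ \xi_n\le0\}$ deleted by $T$.

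For the compact case I would transfer admissible test functions in both directions. Assume $C_{p,\wt}(T(K))=0$ and choose $v_j\in C_0^\infty(\R^n)$ with $v_j\ge1$ on $T(K)$ and $\int_{\R^n}(|v_j|^p+|\grad v_j|^p)\wt\,d\xi\to0$. Fix a cutoff $\eta\in C_0^\infty(V)$ with $\eta=1$ near $T(K)$, where $V$ is a bounded open neighbourhood of $T(K)$ whose closure avoids the origin and the slit. Then $\eta v_j\ge1$ on $T(K)$ and is supported in $V$, and, since the full weighted energy of $v_j$ tends to $0$, the product rule shows that the weighted energy of $\eta v_j$ also tends to $0$. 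Setting $u_j:=(\eta v_j)\circ T$, which is smooth, $\ge1$ on $K$, and compactly supported in the bounded set $U:=T^{-1}(V)\subset B'(0,R)\times\R$ for a suitable $R$, I apply Lemma~\ref{lem-intG-T} with $A=U$ and use that $e^{-p\ka x_n}\simeq1$ on $U$ (since $x_n$ is bounded there) to absorb the zeroth-order factor. This yields
\[
\int_{\R^n}(|u_j|^p+|\grad u_j|^p)\,dx \simeq \int_{\R^n}(|\eta v_j|^p+|\grad(\eta v_j)|^p)\wt\,d\xi \to 0,
\]
so $C_p(K)=0$. The converse is symmetric: starting from admissible $u_j\in C_0^\infty(\R^n)$ for $K$, I localize by a cutoff $\zeta$ supported in a bounded neighbourhood of $K$, push the result forward to $\vt_j:=(\zeta u_j)\circ T^{-1}$, extend it by zero across the origin and the slit where it already vanishes (so that $\vt_j\in C_0^\infty(\R^n)$ and $\vt_j\ge1$ on $T(K)$), and invoke Lemma~\ref{lem-intG-T} once more.

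The main obstacle is purely the control of the region of integration. The transfer identity in Lemma~\ref{lem-intG-T} carries the extra factor $e^{-p\ka x_n}$ in its zeroth-order term and is stated only on cylinders $B'(0,R)\times\R$; both features force one to work on a bounded set, which is exactly why the reduction to compact $K$ (where, equivalently, $|\xi|$ stays bounded away from $0$) is indispensable. The second point requiring care is that $T$ is a diffeomorphism only onto $\R^n$ with the slit $\{\xi'=0,\ \xi_n\le0\}$ removed and degenerates at $\xi=0$; all cutoffs and transferred test functions must therefore be kept supported away from these sets, which is possible precisely because $T(K)$ is a compact subset of the punctured closed upper half-ball. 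Once these localizations are in place, the two applications of Lemma~\ref{lem-intG-T} give the equivalence for compact sets, and the Choquet reduction completes the proof.
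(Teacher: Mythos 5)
Your argument is correct and rests on the same mechanism as the paper's proof, namely transferring near\--optimal test functions through $T$ and invoking Lemma~\ref{lem-intG-T} together with the observation that $e^{-p\ka x_n}\simeq1$ on bounded sets; but it is organised differently. The paper proves only the implication $C_p(E)=0\Rightarrow C_{p,\wt}(T(E))=0$ and cites Lemma~6.6 of \cite{BM} for the converse, whereas you prove both directions symmetrically, which makes the lemma self-contained. More substantially, the paper never localises near the compact set: it takes a near-optimal $\phi\in C_0^\infty(\R^n)$ for $T^{-1}(K)$, truncates it below $x_n=-1$, and pushes it forward by the \emph{even reflection} in $\xi_n$ (the same $P$-symmetrisation used to remove the Neumann data), so the transferred function is automatically Lipschitz on all of $\R^n$ and Lemma~\ref{lem-intG-T} is only ever applied inside the cylinder $B'\times\R$. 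You instead multiply by a cutoff near the compact set and extend the pushforward by zero across the origin and the deleted half-line $\{\xi'=0,\ \xi_n\le0\}$; this is legitimate because the image under $T$ of a bounded set has compact closure in $T(\R^n)$, but it costs an extra product-rule estimate and an application of Lemma~\ref{lem-intG-T} on a larger cylinder $B'(0,R)\times\R$ (which the lemma does allow). The one step you should tighten is the appeal to \eqref{eq-Cp-choq}: that identity is stated for Borel sets, while $E$, and hence $T(E)$, is arbitrary. The paper first replaces $E$ by a Borel superset of zero capacity (outer regularity) before using inner regularity by compact sets; without this, the inference ``$C_p(K)=0$ for every compact $K\subset E$ implies $C_p(E)=0$'' is not justified. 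This is a one-line fix, not a flaw in the main construction.
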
 

\begin{proof}
The fact that if $C_{p,\wt}(T(E))=0$ then $C_p(E)=0$ follows from Lemma~6.6 
in Bj\"orn--Mwasa~\cite{BM}.

Conversely, assume that $C_p(E)=0$.
Replacing $E$ by a Borel set $E_0\supset E$ with zero capacity,
we can assume that $E$  is a Borel set.
Let $K\subset T(E)$ be compact.  
Then $T^{-1}(K)\subset E$ is compact and hence $C_p(T^{-1}(K))=0$.
For $\eps>0$, choose $\phi\in C^\infty_0(\R^n)$ such that $\phi\ge 1$ 
on $T^{-1}(K)$ and $\|\phi\|_{\Wp(\R^n)}<\eps$.
Multiplying $\phi$ by a suitable cut-off function,
we can assume that $\phi(x)=0$ when $x_n\le-1$.
Define 
\begin{equation*}  	
\phit(\xi',\xi_n):= \begin{cases}
(\phi\circ T^{-1})(\xi',\xi_n)    &\text{if } \xi\in T(\clBprime\times\R),\\ 
(\phi\circ T^{-1})(\xi',-\xi_n) &\text{if } \xi\in PT(B'\times\R),\\
0 &\text{if }\xi=0. \end{cases}
\end{equation*}
Then $\phit$ is Lipschitz continuous, by Lemma~\ref{lem-JB},
belongs to $\Hp_0(\R^n,\wt)$ and is such that $\phit\ge1$ on $K$.
Using Lemma~\ref{lem-intG-T} and the fact that $\phi(x)=0$ when
$x_n\le-1$, we have 
\[
\|\phit\|_{\Hp(\R^n,\wt)}  \simeq \|\phit\|_{\Hp(T(B'\times\R),\wt)}
\simle \|\phi\|_{\Wp(B'\times\R)}\le\|\phi\|_{\Wp(\R^n)}<\eps,
\]
with comparison constants independent of $\phi$ and $\eps$.
Letting $\eps\to 0$, gives that $C_{p,\wt}(K)=0$ and hence \eqref{eq-Cp-choq} concludes the proof.
\end{proof}

The following existence and uniqueness theorem is the main
  result of this section.
Note that there may also exist unbounded solutions.

\begin{thm}   \label{thm-uniq-sol}
Assume that $F_0$ is unbounded and let $f\in C(F_0)$ be such that 
\[
f(\infty):=\lim_{F_0\ni x\to\infty}f(x) \quad \text{exists and is finite}.
\]
Then there exists a unique bounded continuous weak solution 
$u\in\Wploc(\clG\setm F)$ of the mixed problem for the equation
$\Div\A(x,\grad u)=0$ in $G\setm F$ with zero conormal derivative 
on $\bdy G\setm F$ and such that 
\begin{equation}		\label{eq-limu=f}
\lim_{G\setm F\ni x\to x_0}u(x)=f(x_0) \quad \text{for $C_{p}$-quasievery
$x_0\in F_0$.}
\end{equation}
\end{thm}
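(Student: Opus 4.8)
The plan is to transport the entire problem to the unit ball via the diffeomorphism $T$ and the reflection $P$, solve a \emph{pure} Dirichlet problem for $\Div\B(\xi,\grad\ut)=0$ in $D$ using the machinery of \cite{HKM} and \cite{BBM}, and then transform the solution back. By Theorem~\ref{thm-G-TG}, a function $u\in\Wploc(\clG\setm F)$ is a weak solution of the mixed problem if and only if its symmetric reflection $\ut$ (as in \eqref{eq-def--ut}) is a weak solution of $\Div\B(\xi,\grad\ut)=0$ in $D$ with $\ut=\ut\circ P$. Thus it suffices to produce a unique bounded continuous solution of this Dirichlet problem which is invariant under $P$ and attains the correct data $C_{p,\wt}$-quasieverywhere on $\bdy D$.

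First I would set up the boundary data. Since $\bdy D\subset\Ft$, the whole of $\bdy D$ carries Dirichlet data, and I define $\ft$ on $\bdy D$ by $\ft=f\circ T^{-1}$ on $T(F_0)$, $\ft=f\circ T^{-1}\circ P$ on $PT(F_0)$, and $\ft(0)=f(\infty)$. Because $T$ is a homeomorphism and $P$ fixes $\{\xi_n=0\}$, the two pieces agree where they meet and $\ft=\ft\circ P$. The one delicate point is continuity at the origin: as $\xi\to0$ along $\bdy D$ the relation $|\xi|=e^{-\ka x_n}$ (with $\xi=Tx$) forces $x_n\to\infty$, so the preimage escapes to infinity in $F_0$, and the hypothesis that $f(\infty)$ exists and is finite is exactly what guarantees $\ft\in C(\bdy D)$. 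Since $D$ is bounded and $\wt=|\xi|^{p-n}$ is a $p$-admissible ($A_p$) weight, the continuous function $\ft$ is resolutive and the Perron solution $\ut:=H_{\ft}$ is the unique bounded continuous weak solution of $\Div\B(\xi,\grad\ut)=0$ in $D$; by the Kellogg property it satisfies $\lim_{\xi\to\xi_0}\ut(\xi)=\ft(\xi_0)$ for $C_{p,\wt}$-quasievery $\xi_0\in\bdy D$, see \cite{HKM} and \cite{BBM}. Boundedness of $\ut$, and hence of $u$, follows from the comparison principle, as $\inf_{\bdy D}\ft\le\ut\le\sup_{\bdy D}\ft$.

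Next I would symmetrize. Since $D$ and $\B$ are invariant under $P$ and $\ft=\ft\circ P$, the function $\ut\circ P$ is again a bounded continuous solution with boundary data $\ft$, so by uniqueness of the Perron solution $\ut=\ut\circ P$. Theorem~\ref{thm-G-TG} then gives that $u:=\ut\circ T$, with $\ut$ restricted to $T(\clG\setm F)$, is a weak solution of $\Div\A(x,\grad u)=0$ in $G\setm F$ with zero conormal derivative on $\bdy G\setm F$, and $u\in\Wploc(\clG\setm F)$ by Proposition~\ref{prop-Wploc-Hploc}; it is continuous, even up to the Neumann boundary, by Remark~\ref{rem-cont-sol}. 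For the boundary values, $C_{p,\wt}$-quasievery $\xi_0\in T(F_0)\subset\bdy D$ is a regular point, so using that $T$ is a homeomorphism and preserves sets of capacity zero (Lemma~\ref{lem-Cpw=Cp=0}), the limit $\lim_{x\to x_0}u(x)=f(x_0)$ holds for $C_p$-quasievery $x_0\in F_0$, which is \eqref{eq-limu=f}.

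Finally, for uniqueness, if $u_1,u_2$ are two bounded continuous weak solutions attaining $f$ at $C_p$-quasievery point of $F_0$, then their symmetric reflections $\ut_1,\ut_2$ are bounded solutions in $D$ taking the data $\ft$ at $C_{p,\wt}$-quasievery point of $\bdy D$ (again via Lemma~\ref{lem-Cpw=Cp=0}), and the comparison principle for the Dirichlet problem forces $\ut_1=\ut_2$, hence $u_1=u_2$. I expect the main obstacle to be the careful treatment of the single boundary point $\xi=0$: one must verify that $\ft$ is genuinely continuous there — this is precisely where the assumption on $f(\infty)$ enters — and that the exceptional set on which the data is \emph{not} attained can be controlled in terms of $C_{p,\wt}$ on $\bdy D$ and then transferred back to $C_p$ on $F_0$, with the point at infinity itself deliberately excluded from \eqref{eq-limu=f}, its regularity being the subject of the next section.
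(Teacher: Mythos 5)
Your proposal is correct and follows essentially the same route as the paper: transport $f$ to $\ft\in C(\bdy D)$, invoke the existence/uniqueness result of Bj\"orn--Bj\"orn--Mwasa~\cite[Theorem~3.12]{BBM} for the Dirichlet problem for $\Div\B=0$ in $D$, symmetrize via uniqueness to get $\ut=\ut\circ P$, pull back with Theorem~\ref{thm-G-TG}, and transfer the exceptional sets with Lemma~\ref{lem-Cpw=Cp=0}. The only cosmetic difference is that in the uniqueness step you appeal to ``the comparison principle,'' whereas what is really needed (and what the paper uses) is the uniqueness statement of \cite[Theorem~3.12]{BBM} for bounded solutions attaining data only $C_{p,\wt}$-quasieverywhere, together with the fact that the origin has zero $(p,\wt)$-capacity so it can be absorbed into the exceptional set.
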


\begin{remark}  \label{rem-bdd-F0}
The proof below shows that the conclusion of
Theorem~\ref{thm-uniq-sol} holds also when $F_0$ is bounded.
Moreover, since the origin has $(p,\wt)$-capacity zero, the solution
$u$ is then independent of the value $f(\infty)$ assigned to $\xi=0$, 
\end{remark}

\begin{proof}
Define
\begin{equation}  		\label{eq-def--ft}
\ft(\xi):= \begin{cases}
        (f\circ T^{-1})(\xi)    &\text{if } \xi\in T(F_0),\\
        (f\circ (PT)^{-1})(\xi) &\text{if } \xi\in PT(F_0\cap G),\\
        f(\infty) &\text{if } \xi=0.\end{cases}
\end{equation}
Then $\ft\in C(\bdy D)$.
By Bj\"orn--Bj\"orn--Mwasa~\cite[Theorem~3.12]{BBM}, there exists 
a unique bounded continuous weak solution $\ub\in\Hp\loc(D,\wt)$
of the equation \eqref{eq-DivB} such that
\begin{equation}	\label{eq-limub=ft}
\lim_{D\ni\xi\to\xi_0}\ub(\xi)=\ft(\xi_0) 
\quad \text{for $C_{p,\wt}$-quasievery $\xi_0\in \bdy D$,}
\end{equation}
i.e.\ for all $\xi_0\in \bdy D\setm Z$ for some 
set $Z\subset\bdy D$  with $C_{p,\wt}(Z)=0$.
Note that \eqref{eq-TG--PTG} holds in $D$ and $\ft=\ft\circ P$.
So \eqref{eq-limub=ft} gives
\[
\lim_{D\ni\xi\to\xi_0}\ub(P\xi)=\lim_{D\ni\xi\to P\xi_0}\ub(\xi)=\ft(P\xi_0)=\ft(\xi_0)
\]
for all $\xi_0\in \bdy D\setm P(Z)$.
That is, $\ub\circ P$ also satisfies~\eqref{eq-limub=ft} and 
$\ub\circ P\in\Hploc(D,\wt)$.
Since $\phi\circ P\in C_0^\infty(D)$ if and only if $\phi\in C_0^\infty(D)$, 
the change of variables $\zeta=P\xi$ together with~\eqref{eq-TG--PTG} 
shows that the integral identity \eqref{int-B} holds for $\ub\circ P$ as well.
Thus $\ub\circ P$ is also a bounded continuous weak solution of~\eqref{eq-DivB} 
in $D$, satisfying~\eqref{eq-limub=ft}.
By the uniqueness in \cite[Theorem~3.12]{BBM}, we conclude that
$\ub=\ub\circ P$.

Define $u:=\ub\circ T$, with $\ub$ restricted to $T(\clG\setm F)$.
Theorem~\ref{thm-G-TG} shows that $u$ is a continuous weak solution 
of $\Div\A(x,\grad u)=0$ in $G\setm F$ with zero conormal derivative,
as in Definition~\ref{defi-weak-divA}.
Since $\ub$ satisfies \eqref{eq-limub=ft}, it then follows that $u$
satisfies \eqref{eq-limu=f} for every $x_0\in F_0\setm Z_0$, where
$Z_0:=T^{-1}(Z\cap T(\clG))$ with $C_p(Z_0)=0$ by
Lemma~\ref{lem-Cpw=Cp=0}.  

To prove the uniqueness, suppose that $v\in\Wploc(\clG\setm F)$ 
is a bounded 
continuous weak solution of the equation $\Div\A(x,\grad u)=0$
satisfying \eqref{eq-limu=f} for 
all $x_0\in F_0\setm Z'_0$ with $C_p(Z'_0)=0$.
Let $\vt$ be as in \eqref{eq-def--ut} with $u$ replaced by $v$.
Then by Theorem~\ref{thm-G-TG}, $\vt$ is a continuous weak solution of~\eqref{eq-DivB}.

Since $v$ satisfies \eqref{eq-limu=f}, it follows that  $\vt$ satisfies 
\eqref{eq-limub=ft} for 
each $\xi_0\in\bdy D\setm Z'$, where $Z':=T(Z'_0)\cup PT(Z'_0)\cup\{0\}$. 
Now we have that $C_{p,\wt}(T(Z'_0))=0$ by Lemma~\ref{lem-Cpw=Cp=0},
and so $C_{p,\wt}(PT(Z'_0))=0$, by reflection.
The origin $0$ has zero $(p,\wt)$-capacity by \cite[Lemma~7.6]{BM}, and 
it follows by subadditivity that $C_{p,\wt}(Z')=0$. 

By \cite[Theorem~3.12]{BBM}, the solution of \eqref{eq-DivB} 
satisfying~\eqref{eq-limub=ft}  is unique, in other words $\vt=\ub$ and so $v=u$.
\end{proof}

The following definition is adopted from Bj\"orn--Mwasa~\cite{BM}.

\begin{defi}   \label{def-L-space-ka}
The space $\Llp_{\ka}(G\setm F)$ consists of all measurable functions
$v$ on $G\setm F$ such that the norm 
\[ 
\|v\|_{\Llp_\ka(G\setm F)} 
= \biggl( \int_{G\setm F} \bigl( |v(x)|^p e^{-p\ka x_n}
     +|\grad v(x)|^p \bigr) \,dx \biggr)^{1/p} < \infty,
\] 
where $\nabla v=(\partial_1v,\cdots,\partial_nv)$ is the 
distributional gradient of $v$.
The space $\Llp_{\ka,0}(\clG\setm F)$ is the completion of 
$C_0^{\infty}(\R^n\setm F)$ in the above $\Llp_\ka(G\setm F)$-norm.
\end{defi}

Note that the space $\Llp_\ka(G\setm F)$ is contained in 
$\Wploc(\clG\setm F)$.
The following result generalizes~\cite[Theorem~6.3]{BM}
to elliptic divergence type equations.

\begin{thm}   \label{thm-ex-Sob}
Let $f\in \Llp_\ka(G\setm F)$.
Then there exists a unique continuous weak solution $u\in \Llp_\ka(G\setm F)$ 
of the equation $\Div\A(x,\grad u)=0$ in $G\setm F$ with zero 
conormal derivative on $\bdy G\setm F$ and
such that \(u-f\in\Llp_{\ka,0}(\clG\setm F)\).
\end{thm}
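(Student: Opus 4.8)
The plan is to transfer the problem to the reflected unit ball $D$ via the change of variables $T$, apply the corresponding existence and uniqueness theory for the weighted equation $\Div\B(\xi,\grad\ut)=0$, and then pull the solution back. First I would set $\ft=f\circ T^{-1}$ on $T(G\setm F)$, reflect it by $\ft=\ft\circ P$ to $PT(G\setm F)$, and set $\ft(0)=0$ (the origin being a single point, its value is irrelevant). By Lemma~\ref{lem-intG-T} and the norm equivalences there, the hypothesis $f\in\Llp_\ka(G\setm F)$ translates into $\ft\in\Hp(D,\wt)$: the two integrals defining the $\Llp_\ka$-norm correspond exactly to the weighted $L^p$-norms of $\ut$ and $\grad\ut$ over $D$, using that $e^{-p\ka x_n}\wt$-type weights match up under $T$.

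Next I would invoke the existence and uniqueness theorem for the Dirichlet problem for $\Div\B(\xi,\grad\cdot)=0$ in the Sobolev sense, as developed in Heinonen--Kilpel\"ainen--Martio~\cite{HKM} (the standard variational existence via minimization of the associated energy, using the ellipticity and monotonicity established in Theorems~\ref{thm-B-ellipt-bdd} and~\ref{thm-monot}, together with the homogeneity~\eqref{eq-homog-A}). This yields a unique weak solution $\ub\in\Hploc(D,\wt)$ with $\ub-\ft\in\Hp_0(D,\wt)$, continuous after modification on a null set by Remark~\ref{rem-cont-sol}. I would then show $\ub=\ub\circ P$ exactly as in the proof of Theorem~\ref{thm-uniq-sol}: since $\ft=\ft\circ P$ and the extended $\B$ satisfies~\eqref{eq-TG--PTG}, the reflected function $\ub\circ P$ is also a solution with the same boundary data in the $\Hp_0(D,\wt)$-sense, and uniqueness forces the two to coincide.

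Finally I would define $u:=\ub\circ T$ on $\clG\setm F$. Theorem~\ref{thm-G-TG} then guarantees that $u$ is a continuous weak solution of $\Div\A(x,\grad u)=0$ in $G\setm F$ with zero conormal derivative on $\bdy G\setm F$, and Proposition~\ref{prop-Wploc-Hploc} (combined with Lemma~\ref{lem-intG-T}) shows that the membership $\ub-\ft\in\Hp_0(D,\wt)$ pulls back to $u-f\in\Llp_{\ka,0}(\clG\setm F)$, since $\Llp_{\ka,0}$ is precisely the completion of $C_0^\infty(\R^n\setm F)$ matching the completion defining $\Hp_0(D,\wt)$ under the norm equivalence. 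Uniqueness of $u$ follows from uniqueness of $\ub$: any competing $v$ yields, via Theorem~\ref{thm-G-TG}, a solution $\vt$ with $\vt-\ft\in\Hp_0(D,\wt)$ and $\vt=\vt\circ P$, whence $\vt=\ub$ and $v=u$.

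I expect the main obstacle to be the correct matching of the Sobolev-type spaces under the transformation, specifically verifying that $u-f\in\Llp_{\ka,0}(\clG\setm F)$ corresponds exactly to $\ub-\ft\in\Hp_0(D,\wt)$. The subtlety is that $\Llp_{\ka,0}$ is defined as a completion of $C_0^\infty(\R^n\setm F)$ rather than by a boundary-trace condition, so I must check that the reflection and the density argument are compatible: approximating $u-f$ by compactly supported smooth functions and transferring these approximations to $D$ via $T$, controlling both the gradient and the weighted zeroth-order terms uniformly through Lemma~\ref{lem-intG-T}. Handling the behaviour near the origin $\xi=0$ (the image of the point at infinity) requires care, but since $\{0\}$ has $(p,\wt)$-capacity zero by~\cite[Lemma~7.6]{BM}, it does not obstruct membership in $\Hp_0(D,\wt)$.
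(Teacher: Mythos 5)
Your proposal is correct and follows essentially the same route as the paper: transform and reflect $f$ to get $\ft\in\Hp(D,\wt)$, solve the Dirichlet problem for $\Div\B(\xi,\grad\cdot)=0$ via \cite[Theorems~3.17 and~3.70]{HKM}, establish $\ub=\ub\circ P$ by the reflection-plus-uniqueness argument, pull back with Theorem~\ref{thm-G-TG}, and transfer uniqueness the same way. The only difference is that the paper outsources the space correspondences ($\ft\in\Hp(D,\wt)$, and the equivalence of $u-f\in\Llp_{\ka,0}(\clG\setm F)$ with $\ub-\ft\in\Hp_0(D,\wt)$) and the symmetry step to \cite[Propositions~5.3, 5.5 and Corollary~6.2]{BM}, whereas you propose to re-derive them from Lemma~\ref{lem-intG-T} and density -- which is precisely what those cited results do.
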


\begin{proof}
Let $\ft$ be defined as in \eqref{eq-def--ut}, with $u$ replaced by $f$.
Then $\ft\in\Hp(D,\wt)$, by \cite[Proposition~5.3]{BM}.
By \cite[Theorems~3.17 and~3.70]{HKM}, there exists a unique continuous 
weak solution $\ut\in\Hp(D,\wt)$ of 
the degenerate equation~\eqref{eq-DivB} such that 
\(\ut-\ft\in\Hp_0(D,\wt)\).
Since $\B(\xi,Pq)=P\B(P\xi,q)$ by \eqref{eq-TG--PTG}, we infer from
\cite[Corollary~6.2]{BM} (with $\A(\xi,q)$ in~\cite{BM}
replaced by $\B(\xi,q)$) that $\ut=\ut\circ P$.
By Theorem~\ref{thm-G-TG}, $u:=\ut\circ T$ (with $\ut$ restricted to 
$T(G\setm F)$) is a weak solution of $\Div\A(x,\grad u)=0$ in $G\setm F$ 
with zero conormal derivative on $\bdy G\setm F$. 
Moreover, $u\in\Llp_\ka(G\setm F)$ by \cite[Proposition~5.3]{BM} and 
$u-f\in\Llp_{\ka,0}(\clG\setm F)$ by \cite[Proposition~5.5]{BM}.

To prove the uniqueness, suppose that 
$v\in\Llp_{\ka}(G\setm F)$ is a continuous weak solution of the equation 
$\Div\A(x,\grad u)=0$ in $G\setm F$ with zero conormal derivative
on $\bdy G\setm F$ 
and \(v-f\in\Llp_{\ka,0}(\clG\setm F)\). 
Let $\vt$ be as in \eqref{eq-def--ut}, with $u$ replaced by~$v$.
Theorem~\ref{thm-G-TG} then implies that $\vt$ satisfies 
the equation~\eqref{eq-DivB}.
Moreover, \cite[Proposition~5.5]{BM} shows that $\vt-\ft\in\Hp_0(D, \wt)$. 
From the uniqueness of solutions to \eqref{eq-DivB} we thus get that
$\vt=\ut$, and so $v=u$.
\end{proof}

\section{Boundary regularity at infinity}
\label{sect-Wiener}

We saw in Remark~\ref{rem-cont-sol} that weak solutions of the equation 
$\Div\A(x,\grad u)=0$ with zero conormal derivative are
continuous in $G\setm F$ and at the Neumann boundary $\bdy G\setm F$. 
Moreover, if the Dirichlet boundary data $f$ are continuous on $F_0$, 
then the solution is continuous at $F_0$, except possibly for a set of 
Sobolev $C_p$-capacity zero.
We now study continuity at the point at infinity.

We follow Bj\"orn--Mwasa~\cite[Section~8]{BM} giving the following definition.

\begin{defi}   \label{def-reg-infty}
Assume that $F$ is unbounded.
We say that the point at $\infty$ is \emph{regular} for 
the mixed problem~\eqref{mixed-bvp}
for the equation $\Div\A(x,\grad u)=0$ in $G\setm F$
with zero conormal derivative on $\bdy G\setm F$ 
if for all Dirichlet boundary data $f\in C(F_0)$ with a finite limit
\begin{equation}  \label{eq--f(infty)}
\lim_{F_0\ni x\to\infty} f(x) =:f(\infty),
\end{equation}
the unique bounded continuous weak solution $u$ of~\eqref{mixed-bvp}, provided by 
Theorem~\ref{thm-uniq-sol}, satisfies
\begin{equation}   \label{eq-reg--infty}
\lim_{G\setm F\ni x\to \infty} u(x) =f(\infty).
\end{equation}
\end{defi}

Remark~\ref{rem-bdd-F0} shows that the point at infinity is always
irregular when $F_0$ is bounded.
Note that due to the conormal derivative condition, the regularity at
$\infty$ in Definition~\ref{def-reg-infty} differs from the usual notion
of boundary regularity for the Dirichlet problem in unbounded domains,
as in e.g.\  
Heinonen--Kilpel\"ainen--Martio~\cite[Section~9.5]{HKM}.
The following is our first step in characterizing the regularity of the point
at infinity for the mixed problem~\eqref{mixed-bvp}.

\begin{prop}		\label{prop-reg-0-infty}
The point at $\infty$ is regular for the mixed problem~\eqref{mixed-bvp} 
for the equation $\Div\A(x,\grad u)=0$ in $G\setm F$ if and only if 
the origin $0\in\bdry D$ is regular with respect to the equation
\begin{equation}   \label{eq-divB-inD}
\Div\B(\xi,\grad\ut(\xi))=0 \quad \text{in } D,
\end{equation}
where $\B$ is as in \eqref{eq-def-B}.
\end{prop}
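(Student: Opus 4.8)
The plan is to transfer the entire statement through the diffeomorphism $T$ and the reflection $P$, using the correspondence between solutions in Theorem~\ref{thm-G-TG} and the existence/uniqueness in Theorem~\ref{thm-uniq-sol}. First I would record the dictionary. Given $f\in C(F_0)$ with a finite limit $f(\infty)$, the function $\ft$ from~\eqref{eq-def--ft} is the unique symmetric ($\ft=\ft\circ P$) member of $C(\bdy D)$ with $\ft(0)=f(\infty)$, and conversely every symmetric $g\in C(\bdy D)$ arises in this way from $f=g\circ T$. By Theorem~\ref{thm-uniq-sol} the mixed-problem solution is $u=\ub\circ T$, where $\ub\in\Hploc(D,\wt)$ is the unique bounded continuous (and, by the uniqueness argument there, symmetric) weak solution of $\Div\B(\xi,\grad\ub)=0$ in $D$ with boundary data $\ft$; since for continuous data this solution coincides with the Perron solution used to define regularity in \cite[Chapter~9]{HKM}, both notions of regularity refer to the same object. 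Because $|T(x)|=e^{-\ka x_n}$, sending $x\to\infty$ is exactly sending $\xi=Tx\to0$, and $u(x)=\ub(Tx)$; thus $\lim_{x\to\infty}u(x)=f(\infty)$ through $G\setm F$ is equivalent to $\lim_{\xi\to0}\ub(\xi)=\ft(0)$ through the upper half-ball, which by $\ub=\ub\circ P$ is in turn equivalent to the same limit through all of $D$.

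With this dictionary the forward implication is immediate: if $0$ is regular for~\eqref{eq-divB-inD}, then for every admissible $f$ the corresponding datum $g=\ft$ satisfies $\lim_{\xi\to0}\ub(\xi)=\ft(0)$, and translating back yields~\eqref{eq-reg--infty}; hence $\infty$ is regular.

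The hard part is the converse, and the obstacle is that regularity of $0$ demands the correct limit for \emph{all} $g\in C(\bdy D)$, whereas regularity of $\infty$ only feeds us the \emph{symmetric} ones; moreover, since $\Div\B=0$ is nonlinear, I cannot split a general $g$ into symmetric and antisymmetric parts and argue by linearity. The plan is therefore to reduce regularity of $0$ to a single, conveniently symmetric, test function. I would take $g_0(\xi)=|\xi|$, which satisfies $g_0=g_0\circ P$ and corresponds via the dictionary to the admissible datum $f_0(x)=|T(x)|$ on $F_0$ with $f_0(\infty)=g_0(0)=0$. Regularity of $\infty$ applied to $f_0$ gives $\lim_{x\to\infty}u_0(x)=0$, i.e.\ $\lim_{\xi\to0}\ub_0(\xi)=0=g_0(0)$ for the symmetric solution $\ub_0=H_{g_0}$.

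It then remains to upgrade ``$0$ is regular for the single datum $g_0$'' to full regularity of $0$. Here I would invoke the barrier characterization of regular boundary points from \cite[Chapter~9]{HKM}: since $g_0$ is continuous, nonnegative, and vanishes on $\bdy D$ only at $\xi=0$, the comparison and strong minimum principles make $\ub_0=H_{g_0}$ a positive supersolution in $D$ with $\lim_{\xi\to0}\ub_0(\xi)=0$, that is, a barrier at $0$. Existence of a barrier yields regularity of $0$ for all continuous boundary data, which completes the converse. The two points I would verify explicitly, both routine given the cited theory, are that the bounded continuous solution furnished by \cite{BBM} is indeed the Perron solution (so the two regularity notions refer to the same function) and that $\ub_0$ is strictly positive in $D$, so that it genuinely qualifies as a barrier.
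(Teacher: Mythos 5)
Your dictionary and the forward implication coincide with the paper's argument. For the converse, however, you take a genuinely different route: the paper never uses barriers. Given an arbitrary $\fb\in C(\bdy D)$ it forms the two symmetric functions $\fb_1=\min\{\fb,\fb\circ P\}$ and $\fb_2=\max\{\fb,\fb\circ P\}$, which satisfy $\fb_1\le\fb\le\fb_2$ and $\fb_1(0)=\fb_2(0)=\fb(0)$; the corresponding Perron solutions sandwich the Perron solution for $\fb$, each of the two outer ones comes from a mixed problem with admissible continuous Dirichlet data, and regularity at $\infty$ forces both to tend to $\fb(0)$ at the origin, so the middle one does too. This squeeze needs only the order-preserving property of Perron solutions and disposes of the symmetric-versus-general-data obstacle that you correctly identify, without any reduction to a single test function.

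Your barrier route has a gap at its final step. What you actually establish for $\ub_0$, the Perron solution of the datum $g_0(\xi)=|\xi|$, is that it is a positive supersolution in $D$ with $\lim_{\xi\to 0}\ub_0(\xi)=0$, i.e.\ a \emph{weak} barrier. The barrier characterization of regularity in \cite{HKM} requires in addition that $\liminf_{\xi\to\zeta}\ub_0(\xi)>0$ for every $\zeta\in\bdy D\setm\{0\}$, and this does not follow from positivity of $\ub_0$ inside $D$: a Perron solution attains its boundary data only quasieverywhere, so at an irregular point $\zeta\in\Ft$ the liminf is not controlled by $g_0(\zeta)>0$, and $D\cap\bdy B(0,s)$ is not compactly contained in $D$, so one cannot extract the needed positive lower bound on spheres from continuity alone. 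The assertion that a weak barrier already implies regularity is a Bouligand-type theorem which does hold for these weighted quasilinear equations but is not contained in \cite{HKM}; to close the argument you would have to either cite such a result explicitly or upgrade $\ub_0$ to a genuine barrier (e.g.\ by a balayage construction near $0$). The strict positivity of $\ub_0$ also deserves a word when $D$ is disconnected, since the strong minimum principle is applied componentwise. All of this is avoidable by the paper's two-sided comparison.
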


Before the proof of Proposition~\ref{prop-reg-0-infty}, 
we give the definition below, see \cite[Section~9.5]{HKM}.

\begin{defi}
A point $\xi_0\in\bdy D$ is regular for the equation~\eqref{eq-divB-inD} if
\begin{equation*} 
\lim_{D\ni\xi\to\xi_0}\ub(\xi)=\fb(\xi_0)\quad\text{for all }\fb\in C(\bdy D),
\end{equation*}
where $\ub$ is the Perron solution of \eqref{eq-divB-inD} 
with the boundary data $\fb$, as in \cite[Section~9.1]{HKM}.
\end{defi}

Note that $\fb$, being continuous, is resolutive, i.e.\ the upper and 
the lower Perron solutions coincide and are equal to the 
Perron solution $\ub$, see \cite[Theorem~9.25]{HKM}.
Moreover, by Bj\"orn--Bj\"orn--Mwasa~\cite[Theorem~3.12]{BBM}, 
the Perron solution $\ub$ is the only bounded continuous weak solution 
of~\eqref{eq-divB-inD} that attains the boundary values $\fb$ 
$C_{p,\wt}$-quasieverywhere on $\bdy D$ in the sense of
\eqref{eq-limub=ft}.

\begin{proof}[Proof of Proposition~\ref{prop-reg-0-infty}]
Assume that $0\in\bdy D$ is a regular point with respect 
to \eqref{eq-divB-inD}.
Let $f\in C(F_0)$ be such that the limit in \eqref{eq--f(infty)} exists and 
is finite.
Let $u$ be the unique bounded continuous weak solution of $\Div\A(x,\grad u)=0$ 
in $G\setm F$ with zero conormal derivative, 
provided for $f$ by Theorem~\ref{thm-uniq-sol}.
Define $\ut$ and $\ft$ as in \eqref{eq-def--ut} and \eqref{eq-def--ft}, 
respectively.
Then by Theorem~\ref{thm-G-TG}$, \ut$ is a bounded continuous 
weak solution of $\Div\B(\xi,\grad\ut)=0$ in $D$ and 
attains the boundary values $\ft$ $C_{p,\wt}$-quasieverywhere on  $\bdy D$.
Note that $\ft\in C(\bdy D)$. 
Thus, by \cite[Theorem~3.12]{BBM}, $\ut$ is the Perron solution of~\eqref{eq-divB-inD} with the boundary data $\ft$.

Since $0\in\bdy D$ is regular,  we have that  
\[
\lim_{G\setm F\ni x\to \infty} u(x) =\lim_{D\ni\xi\to0}\ut(\xi)=\ft(0)=f(\infty).
\]
Thus, $u$ satisfies \eqref{eq-reg--infty} and so $\infty$ is regular 
for the mixed problem for the equation $\Div\A(x,\grad u)=0$ in $G\setm F$.

Conversely, assume that $\infty$ is regular for the mixed 
problem~\eqref{mixed-bvp} and let $\fb\in C(\bdy D)$.
The function $\fb$ is not necessarily symmetric, so we consider 
\[
\fb_1=\min\{\fb,\fb\circ P\} \quad \text{and} \quad
\fb_2=\max\{\fb,\fb\circ P\},
\]
which are symmetric, i.e.\ $\fb_j=\fb_j\circ P$, $j=1,2$.
Let $\ub$ and $\ub_j$ be the Perron solutions of \eqref{eq-divB-inD} with
boundary data $\fb$ and $\fb_j$, respectively.
Define $u_j:=\ub_j\circ T$, $j=1,2$, with $\ub_j$ restricted to $T(\clG\setm F)$.
Then by Theorem~\ref{thm-G-TG}, $u_j$ are bounded continuous weak solutions of 
the mixed problem~\eqref{mixed-bvp} satisfying \eqref{eq-limu=f} with
boundary data $f_j:=\fb_j\circ T|_{F_0}$.
Note that $f_j$ are continuous on $F_0$ and that the limits
\[
\lim_{F_0\ni x\to\infty}f_j(x)=f_j(\infty) \quad \text{exist and are finite}.
\]
Since $\infty$ is regular for~\eqref{mixed-bvp}, the solutions $u_j$ satisfy
\[
\lim_{G\setm F\ni x\to\infty}u_j(x)=f_j(\infty) =\fb(0).
\]
It now follows that
\[
\lim_{D\ni \xi\to0}\ub_j(\xi)=\fb(0).
\] 
But $\fb_1\le\fb\le\fb_2$ and thus $\ub_1\le\ub\le\ub_2$ by the definition
of Perron solutions.
We conclude that
\[
\lim_{D\ni \xi\to0}\ub(\xi)=\fb(0),
\]
and so $0\in\bdy D$ is regular for the equation \eqref{eq-divB-inD}.
\end{proof}

Regular points with respect to $\Div\B(\xi,\grad\ut(\xi))=0$ in $D$ 
are characterized by the \emph{Wiener criterion}, 
see Heinonen--Kilpel\"ainen--Martio~\cite[Theorem~21.30\,(i)$\eqv$(v)]{HKM}  
and Mikkonen~\cite{M}.
Note that the definitions for regularity of a point $\xi\in\bdy D$ 
with respect to \eqref{eq-divB-inD} in terms of both Sobolev 
and Perron solutions are equivalent, see \cite[Theorem~9.20]{HKM}.

Recall that $D=B_1\setm\Ft$, where $\Ft=T(F)\cup PT(F)\cup\{0\}$. 
Note that 
the ball $B_r:=\{\xi\in\R^n:|\xi|<r\}$ in $B_1$ corresponds to
the truncated cylinder
\begin{equation*} 
G_t:=\{x\in\clG:x_n>t\}=\clBprime\times(t,\infty)\quad\text{with } t=-\frac{1}{\ka}\log r\ge0
\end{equation*}
in  $\clG$ and that $G_t$ contains the lateral boundary but not its base $B'\times\{t\}$.
With the above notation, we see that $G_{2t}$ and $G_{t-1}$ correspond
to $B_{r^2}$ and $B_{2r}$, respectively.
We follow \cite[Section~7]{BM} giving the following definition.

\begin{defi}  
Let $K\subset G_{t-1}$ be a compact set, where $t\ge1$.
The \emph{{\rm(}Neumann\/{\rm)} variational \p-capacity} of $K$ with respect to $G_{t-1}$ is 
\begin{equation*}
\cp_{p,G_{t-1}}(K)= \inf_v \int_{G_{t-1}}|\grad v|^p\,dx,	
\end{equation*} 
where the infimum is taken over all functions 
$v\in C_0^\infty(\R^n)$ satisfying $v\ge1$ on $K$  and $v=0$ on $\clG\setm G_{t-1}$.
\end{defi}

Just as $C_{p,\wt}$, the capacity $\cp_{p,G_{t-1}}$ is also a Choquet capacity.
This was proved in \cite[Section~7]{BM}, even though we only need 
$\cp_{p,G_{t-1}}$ for compact sets here.
In \cite[Section~8]{BM}, the Wiener criterion from \cite[Section~6.16]{HKM} is rewritten in terms of $\cp_{p,G_{t-1}}$ and we thus get the following criterion for the boundary regularity at $\infty$ for the mixed boundary value problem \eqref{mixed-bvp}.

\begin{thm} \label{thm-Breg-infty}
The point at $\infty$ is regular for the mixed boundary value problem 
\eqref{mixed-bvp}   
in $G\setm F$ if and only if the following condition holds
\begin{equation} 		\label{eq-B-infty}
\int_1^\infty\cp_{p,G_{t-1}}(F\cap(\clG_t\setm G_{2t}))^{1/(p-1)}\,dt=\infty.
\end{equation}
\end{thm}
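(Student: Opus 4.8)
The plan is to combine three ingredients already prepared in the paper: the equivalence in Proposition~\ref{prop-reg-0-infty} between regularity at $\infty$ and regularity of the origin for the transformed equation, the classical Wiener criterion for the weighted operator $\B$, and the capacity bookkeeping that carries a condition on balls over to the cylinder. The only genuinely analytic input is the Wiener criterion; everything else is a change of variables.

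First I would use Proposition~\ref{prop-reg-0-infty} to replace the statement ``$\infty$ is regular for the mixed problem~\eqref{mixed-bvp}'' by the equivalent statement ``$0\in\bdy D$ is regular for $\Div\B(\xi,\grad\ut)=0$ in $D$''. Since $\B$ satisfies the structural assumptions of Heinonen--Kilpel\"ainen--Martio~\cite{HKM} with the $A_p$-weight $\wt(\xi)=|\xi|^{p-n}$ --- this was checked in Theorems~\ref{thm-B-ellipt-bdd} and~\ref{thm-monot} together with~\eqref{eq-homog-A} --- the Wiener criterion \cite[Theorem~21.30]{HKM}, in the sharp form of Mikkonen~\cite{M}, applies and characterizes regularity of $0$ through the \emph{variational} weighted capacity of the obstacle $\Ft$ at the scales $B_r$ around $0$:
\[
\text{$0$ regular}\quad\Longleftrightarrow\quad
\int_0^1 \biggl(\frac{\cp_{p,\wt}(\Ft\cap\clB_r,B_{2r})}{\cp_{p,\wt}(B_r,B_{2r})}\biggr)^{1/(p-1)}\frac{dr}{r}=\infty .
\]
The decisive feature is that this capacity depends only on $p$ and $\wt$, not on the particular operator $\B$. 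For the weight $\wt(\xi)=|\xi|^{p-n}$ the capacitary potential between concentric spheres is a multiple of $\log|\xi|$, so the denominator $\cp_{p,\wt}(B_r,B_{2r})$ is a positive constant independent of $r$ and may be absorbed; the criterion reduces to the divergence of $\int_0^1\cp_{p,\wt}(\Ft\cap\clB_r,B_{2r})^{1/(p-1)}\,dr/r$.

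Second I would transfer this integral back to $\clG$ through $T$. Because the capacity $\cp_{p,\wt}$ is the weighted $p$-capacity and is independent of the operator $\B$, the capacity computations performed for the $p$-Laplacian in \cite[Sections~7 and~8]{BM} apply here verbatim: Lemma~\ref{lem-intG-T} compares $\int|\grad v|^p\,dx$ with $\int|\grad\vt|^p\wt\,d\xi$, the reflection symmetry $\Ft=T(F)\cup PT(F)\cup\{0\}$ lets one pass between the full ball and the half-ball at no cost, and the scale correspondence $B_r\leftrightarrow G_t$, $B_{r^2}\leftrightarrow G_{2t}$, $B_{2r}\leftrightarrow G_{t-1}$ under $t=-\tfrac1\ka\log r$ identifies, up to fixed comparison constants, the annular ball capacity with the Neumann capacity $\cp_{p,G_{t-1}}(F\cap(\clG_t\setm G_{2t}))$. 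The substitution $r=e^{-\ka t}$ turns $dr/r$ into $-\ka\,dt$ and sends a neighbourhood of $r=0$ to a neighbourhood of $t=\infty$; since convergence of the Wiener integral is decided only by this tail, the lower limit may be fixed at the smallest value $t=1$ for which $\cp_{p,G_{t-1}}$ is defined. This yields exactly condition~\eqref{eq-B-infty}.

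The main obstacle --- and the only point requiring care beyond quoting \cite{BM} --- is the second paragraph: one must be sure that the Wiener criterion genuinely characterizes regularity through the \emph{operator-independent} weighted capacity $\cp_{p,\wt}$, and not through some quantity depending on $\B$. This is precisely what lets a theorem about the general operator $\A$ follow from the $p$-Laplace capacity estimates of \cite{BM}. Once this is secured, the passage from the standard ball form of the Wiener integral to the annular form over $\clG_t\setm G_{2t}$, and the verification that all comparison constants are harmless, are exactly the bookkeeping already carried out in \cite[Section~8]{BM}; only the substitution and the symmetric reflection need to be tracked here.
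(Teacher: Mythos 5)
Your proposal is correct and follows essentially the same route as the paper: the paper's proof consists precisely of invoking Proposition~\ref{prop-reg-0-infty} (in place of Lemma~8.2 of \cite{BM}) and then citing the capacity bookkeeping of \cite[Section~8]{BM}, which rewrites the Wiener criterion of \cite[Theorem~21.30]{HKM} and Mikkonen~\cite{M} in terms of $\cp_{p,G_{t-1}}$. Your observation that the criterion involves only the operator-independent capacity $\cp_{p,\wt}$ is exactly the point that makes the \p-Laplacian computations of \cite{BM} reusable here.
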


The proof follows from \cite[Section~8]{BM},
with Proposition~\ref{prop-reg-0-infty} playing the role of Lemma~8.2 in \cite{BM}.
See Examples~8.7 and~8.8 in \cite{BM} for concrete sets satisfying or failing
the Wiener condition~\eqref{eq-B-infty}.

\section{General behaviour of the solutions at \texorpdfstring{$\infty$}{oo}}
\label{Sect-remov}

Our aim in this section is to show a Phragm\'en--Lindel\"of type 
trichotomy for the solutions of the equation $\Div\A(x,\grad u)=0$
in $G$ with zero conormal derivative $C_p$-quasieverywhere on $\bdy G\cap G_t$.
We start by showing that sets of Sobolev $C_p$-capacity zero 
are removable for the solutions.
Recall that $F$ is a closed subset of $\clG$.

For compact subsets of $G$, the following removability result is just 
\cite[Theorem~7.36]{HKM}.
Using the transformation $T$, we can remove also parts of the lateral
Dirichlet boundary and change them into the Neumann boundary.
This will make it possible to study the behaviour of the solutions 
at $\infty$.

Recall that $G_t:=\{x\in\clG:x_n>t\}$
is the truncated cylinder containing its lateral boundary
but not its base $B'\times\{t\}$.

 \begin{lem} \label{lem-remov}
Assume that for some $t\ge0$, the set $E:=F\cap G_t$ satisfies $C_p(E)=0$.
 Let $u\in\Wploc(\clG\setm F)$ be a continuous weak solution of 
$\Div\A(x,\grad u)=0$ in $G\setm F$ 
with zero conormal derivative on $\bdy G\setm F$. 
Assume that $u$ is bounded in the set $(G_t\setm G_\tau) \setm F$ 
for every $\tau>t$.

Then $u$ can be extended to $E$ as a continuous weak solution in 
$(G\setm F)\cup E$ with zero conormal derivative on 
$\bdy G\setm (F\setm G_t)$.
\end{lem}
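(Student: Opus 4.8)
The plan is to transfer the removability problem to the transformed setting via $T$ and apply the known removability result for the degenerate equation on the ball. First I would set $\ut$ to be the symmetric reflection of $u\circ T^{-1}$ as in \eqref{eq-def--ut}, which by Proposition~\ref{prop-Wploc-Hploc} lies in $\Hploc(D,\wt)$, and which by Theorem~\ref{thm-G-TG} is a continuous weak solution of $\Div\B(\xi,\grad\ut)=0$ in $D=B_1\setm\Ft$. The set $E=F\cap G_t$ corresponds under $T$ to a compact subset $\Et:=T(E)\cup PT(E)$ of the open ball $B_r$, where $r=e^{-\ka t}$; here I use that $G_t$ corresponds to $B_r$ and that points of $G_t$ stay away from the origin, so $\Et$ is a compact subset of the open ball away from $\xi=0$. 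By Lemma~\ref{lem-Cpw=Cp=0}, the hypothesis $C_p(E)=0$ gives $C_{p,\wt}(T(E))=0$, hence $C_{p,\wt}(PT(E))=0$ by reflection, and so $C_{p,\wt}(\Et)=0$ by subadditivity.

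Next I would invoke the removability theorem for the degenerate equation, namely \cite[Theorem~7.36]{HKM}, applied to $\ut$ on the open set $D\cup\Et$. The boundedness hypothesis on $u$ in $(G_t\setm G_\tau)\setm F$ for every $\tau>t$ translates, via $T$ and Lemma~\ref{lem-JB}, into local boundedness of $\ut$ near $\Et$ within $D\cup\Et$; this is exactly the hypothesis needed for \cite[Theorem~7.36]{HKM}. That theorem then yields that $\ut$ extends to a continuous weak solution of $\Div\B(\xi,\grad\ut)=0$ in the enlarged open set $D\cup\Et$. Since both $\ut$ and $\Et$ are symmetric under the reflection $P$, the uniqueness of the continuous representative shows the extended solution still satisfies $\ut=\ut\circ P$.

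Finally I would transfer back. Applying the converse direction of Theorem~\ref{thm-G-TG} to the symmetric solution $\ut$ on the enlarged domain $D\cup\Et$ gives that $\ut\circ T$ is a continuous weak solution of $\Div\A(x,\grad u)=0$ with zero conormal derivative on the corresponding Neumann boundary. Tracking which part of the boundary has been freed, the reflected-away set $\Et$ corresponds precisely to converting $F\cap G_t$ from Dirichlet boundary into interior plus Neumann boundary, so the new Neumann boundary is $\bdy G\setm(F\setm G_t)$ and the solution extends across $E$ into $(G\setm F)\cup E$, as claimed. One should check that the extended $\ut\circ T$ indeed lies in $\Wploc$ of the appropriate set, which follows from Proposition~\ref{prop-Wploc-Hploc}.

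The main obstacle I anticipate is the careful bookkeeping of boundaries under $T$ and $P$: one must verify that $\Et$ is genuinely a compact subset of the \emph{open} ball $B_1$ bounded away from the origin (so that the origin, which has positive measure-theoretic role and carries the point at infinity, is not touched), and that removing $\Et$ from $D$ corresponds exactly to enlarging the Neumann part to $\bdy G\setm(F\setm G_t)$ rather than some larger or smaller set. Verifying the boundedness hypothesis transfers correctly under the change of variables, using that $e^{-p\ka x_n}\simeq 1$ on the relevant compact ranges of $x_n$, is routine but must be stated; the geometric identification of boundaries is where the real care lies.
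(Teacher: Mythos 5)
Your overall strategy is the paper's: reflect, transfer via $T$, kill $\Et:=T(E)\cup PT(E)$ by the removability theorem \cite[Theorem~7.36]{HKM}, and transfer back with Theorem~\ref{thm-G-TG}. But the point you yourself single out as the crux --- that ``$\Et$ is genuinely a compact subset of the open ball $B_1$ bounded away from the origin'' --- is false in general, and your application of the removability theorem is built on it. The set $E=F\cap G_t$ need not be bounded: for $1<p\le n$ there are unbounded sets of $C_p$-capacity zero (e.g.\ a sequence of points $(0',k)$, $k>t$, marching to infinity along the axis of the cylinder). Since $|T(x)|=e^{-\ka x_n}$, such a set is mapped by $T$ onto a set accumulating at the origin $\xi=0$; each individual point of $T(G_t)$ is away from $0$, but there is no uniform lower bound on $|T(x)|$ for $x\in G_t$. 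So $\Et$ is in general neither compact nor bounded away from $0$, and moreover no boundedness of $\ut$ near $\xi=0$ is available, because the hypothesis only gives boundedness of $u$ on $(G_t\setm G_\tau)\setm F$ for each finite $\tau$.

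The argument is repairable, and the repair is exactly where the paper's proof differs from yours. What is true is that $\Et$ is \emph{relatively closed} in the open set $D\cup\Et$ (its only accumulation points outside itself are the origin and points of $T(F)\cup PT(F)$ on the sphere $|\xi|=e^{-\ka t}$, none of which lie in $D\cup\Et$), and that $\ut$ is bounded near each point of $\Et$, since every such point lies in some annulus $B_r\setm \clB_\rho$ with $\rho>0$, where the hypothesis applies. The paper implements this by applying \cite[Theorem~7.36]{HKM} on the annuli $B_r\setm B_\rho$, in which $\Et\setm B_\rho$ is relatively closed and $\ut$ is bounded, and then letting $\rho\to0$; this exhaustion is precisely the device that isolates the origin, where nothing is assumed. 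As written, your verification step fails and must be replaced by one of these two arguments; the remainder of your proposal (capacity transfer via Lemma~\ref{lem-Cpw=Cp=0} and reflection, symmetry of the extension, and the return trip through Theorem~\ref{thm-G-TG} with $F$ replaced by the closed set $F\setm G_t$) is sound and matches the paper.
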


\begin{proof}
Set $\Et:=T(E)\cup PT(E)$.
By Lemma~\ref{lem-Cpw=Cp=0}, the set $T(E)$ has Sobolev $(p,\wt)$-capacity zero.
By reflection, we have $C_{p,\wt}(\Et)=0$.
As in~\eqref{eq-def--ut}, define
\begin{equation}  \label{eq-def-ut}
\ut(\xi',\xi_n)= \begin{cases}
        (u\circ T^{-1})(\xi',\xi_n)    &\text{if } \xi\in T(\clG\setm F),\\ 
        (u\circ T^{-1})(\xi',-\xi_n) &\text{if } \xi\in PT(G\setm F).
        \end{cases}
 \end{equation}
Then by Theorem~\ref{thm-G-TG}, $\ut$ is a continuous weak solution of 
$\Div\B(\xi, \grad\ut)=0$ in $D$, which is bounded in $D\cap (B_r\setm B_\rho)$ 
for every $\rho>0$, where $r=e^{-\ka t}$.
Note that $\Et\setm B_\rho$ is relatively closed in $B_r\setm B_\rho$.

Since $C_{p,\wt}(\Et)=0$, we have by 
Heinonen--Kilpel\"ainen--Martio~\cite[Theorem~7.36]{HKM} that $\ut$ can be
extended to $\Et$ so that it is a continuous weak solution in $B_r\setm B_\rho$
for every $\rho>0$, and thus also in $D\cup (B_r\setm \{0\})$. 
The desired extension of $u$ is then given by $\ut\circ T$. 
\end{proof}

Removability and behaviour of the solutions at $\infty$ are addressed 
in the rest of the section. 
The following two lemmas provide suitable lower and upper bounds for the
solutions.

\begin{lem}	\label{lem-max-Gt}
Assume that for some $t\ge0$, the set $E:=F\cap G_t$ is empty.
Let $u\in\Wploc(\clG\setm F)$ be a weak continuous solution of
$\Div\A(x,\grad u)=0$ in $G\setm F$ with zero conormal derivative on
$\bdy G\setm F$. 
Then $u$ is bounded in the set $G_{t'}\setm G_\tau$  for every $\tau>t'>t$.

If, moreover, $u(x)\le0$ when $x_n=t$,
then  either $u\le0$ in $G_t$ or there exist $A>0$ and $\tau_0>t$ such that for all $\tau\ge \tau_0$,
\begin{equation*} 
\max_{x_n=\tau}u(x)\ge A(\tau -t).
\end{equation*}
\end{lem}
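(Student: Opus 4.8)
The plan is to carry the whole statement through the diffeomorphism $T$ to the punctured ball $B_r\setm\{0\}$, where $r=e^{-\ka t}$, and to read off the two alternatives from the behaviour of the transformed solution at the isolated singularity $\xi=0$. I would begin with the elementary boundedness claim. Since $E=F\cap G_t=\emptyset$, we have $F\cap G_{t'}=\emptyset$ for every $t'>t$, so by Theorem~\ref{thm-G-TG} the symmetric reflection $\ut$ defined as in \eqref{eq-def-ut} is a continuous weak solution of $\Div\B(\xi,\grad\ut)=0$ in $D\cap B_r=B_r\setm\{0\}$. The set $G_{t'}\setm G_\tau$ corresponds under $T$ to the closed spherical shell $\{e^{-\ka\tau}\le|\xi|\le e^{-\ka t'}\}$, a compact subset of $B_r\setm\{0\}$; as $\ut$ is continuous there it is bounded, and hence so is $u=\ut\circ T$ on $G_{t'}\setm G_\tau$. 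This settles the first assertion.

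For the dichotomy I would set $M(\rho)=\max_{|\xi|=\rho}\ut$ for $0<\rho<r$. The hypothesis $u\le0$ on $\{x_n=t\}$ becomes $\ut\le0$ on $\partial B_r$, and the maximum principle for $\B$-harmonic functions on the shells $B_r\setm\overline{B_\rho}$ gives $\max_{\rho\le|\xi|\le r}\ut=\max\{M(\rho),\max_{\partial B_r}\ut\}$, where the last term is $\le0$. Two cases arise. If $\ut\le0$ throughout $B_r\setm\{0\}$, then $u\le0$ in $G_t$, which is the first alternative. Otherwise $\ut(\xi_1)>0$ for some $\xi_1$, and I would argue that $\ut$ must then be unbounded above near the origin: if it were bounded above by some $L$, then $L-\ut\ge0$ would be a nonnegative solution, removable across the origin since the latter has $(p,\wt)$-capacity zero (\cite[Lemma~7.6]{BM}, \cite[Theorem~7.36]{HKM}), and the maximum principle on the full ball $B_r$ would force $\ut\le\max_{\partial B_r}\ut\le0$, contradicting $\ut(\xi_1)>0$. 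The shell identity above also shows that, once positive, $M$ is nonincreasing as $\rho\downarrow0$, so in this second case $M(\rho)\to+\infty$.

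It remains to quantify the blow-up, and this is the main obstacle. The model radial singular solution of the weighted equation is $\log(r/|\xi|)$, so the expected sharp rate is $M(\rho)\simge\log(r/\rho)$; under $T$, with $\rho=e^{-\ka\tau}$ and $\log(r/\rho)=\ka(\tau-t)$, this is precisely the claimed linear growth. To obtain the lower bound I would compare $\ut$ on the shells $B_{\rho_0}\setm\overline{B_\rho}$ with the $\B$-capacitary potential of $\overline{B_\rho}$ relative to $B_{\rho_0}$, whose two-sided estimate $\simeq\log(\rho_0/|\xi|)/\log(\rho_0/\rho)$ and whose energy $\cp_{p,\wt}\simeq(\log(\rho_0/\rho))^{-(p-1)}$ are furnished by the capacitary and singular-solution estimates of \cite[Sections~6 and~7]{HKM}. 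The crux is that a singularity growing strictly slower than $\log(1/|\xi|)$ would be removable across the zero-capacity origin, contradicting $M(\rho)\to\infty$; hence a genuine singularity must grow at least at the fundamental-solution rate.

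I expect the delicate points to be twofold: first, handling the sign changes of $\ut$, since it need not be a positive solution, so the Harnack-type comparisons apply directly only to translates $\ut+K$ on shells where $\ut$ is bounded below, and one must combine these with the homogeneity \eqref{eq-homog-A} (which makes scalar multiples, but not additive translates, of solutions again solutions); and second, upgrading a $\limsup$-rate to a bound valid for all small $\rho$, which the monotonicity of $M$ established above supplies. Once the inequality $M(\rho)\ge A'\log(r/\rho)$ is secured for all $\rho\le\rho_1$, I would transfer it back to the cylinder through Lemma~\ref{lem-JB}, obtaining $\max_{x_n=\tau}u(x)=M(e^{-\ka\tau})\ge A(\tau-t)$ with $A=A'\ka>0$ for every $\tau\ge\tau_0:=-\tfrac1\ka\log\rho_1>t$, which is the second alternative.
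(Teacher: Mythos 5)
Your reduction to the punctured ball and the first (boundedness) assertion match the paper exactly, and your qualitative dichotomy setup is sound. The genuine gap is in the quantitative lower bound, which is the actual content of the second alternative. The paper gets the whole dichotomy in one stroke from \cite[Theorem~7.40]{HKM}: either $\ut\le0$ in $B_r\setm\{0\}$ or
$\liminf_{\rho\to0}\cp_{p,\wt}(B_\rho,B_r)^{1/(p-1)}\max_{\bdy B_\rho}\ut>0$,
and then converts $\cp_{p,\wt}(B_\rho,B_r)^{1/(1-p)}\simge\log(r/\rho)=\ka(\tau-t)$ via \cite[Lemma~7.6]{BM}. You instead try to reassemble this from removability: your ``crux'' --- that a singularity growing strictly slower than $\log(1/|\xi|)$ is removable across a point of zero $(p,\wt)$-capacity --- is asserted, not proved, and it is essentially equivalent to the estimate you are trying to establish. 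It is not an off-the-shelf consequence of \cite[Theorem~7.36]{HKM}, which requires boundedness (not a one-sided growth bound), and the Serrin-type refinements need nonnegativity, which $\ut$ lacks; you flag the sign problem yourself but do not resolve it. The same issue already infects your qualitative step: $L-\ut\ge0$ is indeed a solution (by the homogeneity \eqref{eq-homog-A} with $\la=-1$), but nonnegativity alone does not make the origin removable for it --- you would need $\ut$ bounded below as well.

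Note also that the comparison with the capacitary potential of $\clB_\rho$, as you orient it, is the mechanism of Lemma~\ref{lem-est-with-pot} and produces the \emph{upper} bound $\max_{x_n=\tau}u\simle\tau-t$, not the lower one. To extract a lower bound from the potential $\vt_\rho$ of $\clB_\rho$ in $B_r$ you would have to use it as an upper barrier, $\ut\le M(\rho)\vt_\rho$ on the shell (legitimate since $M(\rho)\vt_\rho$ is a solution with boundary values dominating those of $\ut$), evaluate at a fixed point $\xi_1$ with $\ut(\xi_1)=c>0$, and invoke an \emph{upper} estimate $\vt_\rho(\xi_1)\simle(\log(r/\rho))^{-1}$ to conclude $M(\rho)\ge c/\vt_\rho(\xi_1)\simge\log(r/\rho)$. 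That assembly would give a correct alternative proof, but it is not what you wrote, and the upper potential estimate at a fixed interior point still needs a citation (the radial model is not a solution of $\Div\B=0$, since $\B$ is not rotationally symmetric). As it stands, the proof of the growth alternative is missing; the shortest repair is to quote \cite[Theorem~7.40]{HKM} as the paper does.
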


\begin{proof}
Define $\ut$ as in \eqref{eq-def-ut}.
Then by Theorem~\ref{thm-G-TG}, $\ut\in\Hploc(D,\wt)$ is a continuous 
weak solution of $\Div\B(\xi,\grad\ut(\xi))=0$ in $D$.
In particular, since $E$ is empty, $\ut\in\Hploc(B_r\setm\{0\},\wt)$ 
is a continuous weak solution in $B_r\setm\{0\}$, where $r=e^{-\ka t}$.
This immediately implies the boundedness of $u$ in $G_{t'}\setm G_\tau$ 
for every $\tau>t'>t$.

Next, if $u(x)\le0$ when $x_n=t$, then
$\ut(\xi_0)\le 0$ for all $\xi_0\in\bdy B_r$. 
Hence, by \cite[Theorem~7.40]{HKM}, we have that either $\ut\le0$ 
in $B_r\setm\{0\}$ or
\begin{equation} \label{Ineq-cap-Br}
\liminf_{\rho\to0}(\cp_{p,\wt}(B_{\rho},B_r))^{1/(p-1)}\max_{\bdy B_{\rho}}\ut>0,
\end{equation}
where $\cp_{p,\wt}$ is the variational capacity associated with the
weight $\wt$, as in \cite[Chapter~2]{HKM}.
Inequality \eqref{Ineq-cap-Br} reveals that there exist constants
$a>0$ and $r_0>0$ such that for all $0<\rho\le r_0$, we have 
\begin{equation}  \label{Ineq-cap-m}
\max_{\bdy B_{\rho}}\ut\ge a(\cp_{p,\wt}(B_{\rho},B_r))^{1/(1-p)}.
\end{equation}
By Bj\"orn--Mwasa~\cite[Lemma~7.6]{BM}, we have for all $\rho<r$,
\[
(\cp_{p,\wt}(B_{\rho},B_r))^{1/(1-p)} \simge \log\frac{r}{\rho}
=\ka(\tau-t),\quad\text{where }\tau=-\frac{1}{\ka}\log\rho>t
\]
and the constant in $\simge$ depends only on $n$ and $p$.
Substituting in \eqref{Ineq-cap-m} and using the definition~\eqref{eq-def-ut} 
of $\ut$, concludes the proof. 
\end{proof}

\begin{lem}  \label{lem-est-with-pot}
Assume that for some $t\ge0$, the set $E:=F\cap G_t$ is empty.
Let $u\in\Wploc(\clG\setm F)$ be a weak continuous solution of $\Div\A(x,\grad u)=0$ 
in $G\setm F$ with zero conormal derivative on $\bdy G\setm F$. 
Assume that $u\ge 0$ in $G_t$.  
Then  there exists $A_0\ge0$ such that for all $\tau\ge t+\tfrac{1}{\ka}\log2$,
\[
\max_{x_n=\tau}u(x) \le A_0(\tau-t). 
\]
\end{lem}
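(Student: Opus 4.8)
The plan is to transfer the statement, via the change of variables $T$, into an upper bound for an isolated singularity of the degenerate equation on a punctured ball, and there to dominate the solution by the singular (fundamental) solution with pole at the origin. First I would set $r=e^{-\ka t}$ and let $\ut$ be the reflected function from \eqref{eq-def-ut}. Since $E=F\cap G_t$ is empty, Theorem~\ref{thm-G-TG} shows that $\ut$ is a continuous weak solution of $\Div\B(\xi,\grad\ut)=0$ in $B_r\setm\{0\}$, and the hypothesis $u\ge0$ in $G_t$ gives $\ut\ge0$ there. Writing $\rho=e^{-\ka\tau}$, so that $\tau-t=\frac1\ka\log(r/\rho)$ and the slice $\{x_n=\tau\}$ corresponds to $\bdy B_\rho$, the assertion becomes
\[
\max_{\bdy B_\rho}\ut\simle\log\frac r\rho\qquad\text{for }\rho\le r/2,
\]
with a comparison constant that may depend on $u$. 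Note that the constraint $\tau\ge t+\frac1\ka\log2$ is exactly $\rho\le r/2$, and that it gives $\tau-t\ge\frac1\ka\log2>0$, which will let any additive bounded term be absorbed into $A_0(\tau-t)$.

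The heart of the matter is the upper control of the singularity of $\ut$ at $0$. I would fix $r_1=r/2$ and put $M_0=\max_{\bdy B_{r_1}}\ut<\infty$, which is finite since $\bdy B_{r_1}$ lies in the interior $B_r\setm\{0\}$ where $\ut$ is continuous. Let $\omega$ denote the singular solution of $\Div\B(\xi,\grad\omega)=0$ in $B_{r_1}\setm\{0\}$ with pole at the origin, vanishing on $\bdy B_{r_1}$, constructed from the capacitary potentials in Heinonen--Kilpel\"ainen--Martio~\cite[Sections~6 and~7]{HKM}. Its growth is governed by the variational capacity, namely
\[
\omega(\xi)\simle\bigl(\cp_{p,\wt}(\clB_{|\xi|},B_{r_1})\bigr)^{1/(1-p)}\simle\log\frac{r_1}{|\xi|},
\]
where the last estimate is \cite[Lemma~7.6]{BM}. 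The classification of isolated singularities of nonnegative solutions in \cite[Section~7]{HKM} provides a constant $C\ge0$ with $\ut\simle\omega$ near $0$; since $\omega\to\infty$ at $0$, after enlarging $C$ we may arrange that $M_0+C\omega\ge\ut$ in a punctured neighbourhood of the origin.

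With this in hand I would run a comparison argument. Because $\B$ does not depend on the value of the function, adding a constant preserves solutions, and by the homogeneity \eqref{eq-homog-A} a positive multiple of a solution is again a solution; hence $M_0+C\omega$ is a solution of $\Div\B(\xi,\grad\cdot)=0$ in $B_{r_1}\setm\{0\}$. On each annulus $B_{r_1}\setm\clB_\delta$ with $\delta$ small one has $M_0+C\omega\ge\ut$ on $\bdy B_{r_1}$ (where it equals $M_0$) and on $\bdy B_\delta$ (by the arranged inequality), so the comparison principle yields $\ut\le M_0+C\omega$ there; letting $\delta\to0$ gives $\ut\le M_0+C\omega$ throughout $B_{r_1}\setm\{0\}$. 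Restricting to $\bdy B_\rho$ and using the displayed growth of $\omega$,
\[
\max_{\bdy B_\rho}\ut\le M_0+C\max_{\bdy B_\rho}\omega\simle M_0+\log\frac{r_1}{\rho}\simle\log\frac r\rho,
\]
and then $\log(r/\rho)=\ka(\tau-t)$ together with $\tau-t\ge\frac1\ka\log2$ absorbs $M_0$, producing $A_0$ with $\max_{x_n=\tau}u=\max_{\bdy B_\rho}\ut\le A_0(\tau-t)$.

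I expect the substantive obstacle to be precisely the upper estimate $\ut\simle\omega$ near $0$, i.e.\ a Bôcher type bound ruling out faster-than-fundamental growth for the weighted quasilinear equation. The opposite, lower comparison is the easy Harnack type direction and is essentially what Lemma~\ref{lem-max-Gt} exploited through \cite[Theorem~7.40]{HKM}; the upper direction is the genuine input that must be extracted from the capacitary potential and singular solution estimates of \cite[Sections~6 and~7]{HKM}, and one must take a little care in applying the comparison principle on the punctured ball rather than on a genuine domain.
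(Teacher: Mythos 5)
Your reduction to the punctured ball and the target estimate $\max_{\bdy B_\rho}\ut\simle\log(r/\rho)$ are exactly right, but the proposal has a genuine gap at the step you yourself flag as the crux: the upper bound $\ut\le M_0+C\omega$ near the origin. This B\^ocher-type classification of isolated singularities of nonnegative solutions is not something you can simply quote from \cite[Sections~6 and~7]{HKM}; what is available there is the removability theorem \cite[Theorem~7.36]{HKM}, the existence of singular/capacitary potentials with two-sided capacitary growth, and the \emph{lower} bound on $\max_{\bdy B_\rho}\ut$ of \cite[Theorem~7.40]{HKM} used in Lemma~\ref{lem-max-Gt}. The upper bound for an arbitrary nonnegative solution with a non-removable singularity is precisely the content of the lemma, so invoking "the classification" here is circular: once granted, the comparison on annuli $B_{r_1}\setm\clB_\delta$ is routine, but nothing in your argument actually produces the constant $C$ with $\ut\le M_0+C\omega$ on some $\bdy B_\delta$.

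The paper avoids this entirely by running the comparison in the \emph{easy} direction and then converting it with Harnack on spheres. Let $\vt$ be the capacitary potential of $\clB_\rho$ in $B_r$ (boundary values $1$ on $\clB_\rho$, $0$ on $\bdy B_r$) and $m_\rho=\min_{\bdy B_\rho}\ut$. Since $\ut\ge0$, the comparison principle gives $\ut\ge m_\rho\vt$ in $\clB_{r/2}\setm\clB_\rho$, and \cite[Lemma~6.21]{HKM} together with \cite[Lemma~7.6]{BM} gives the lower bound $\vt\simge 1/\ka(\tau-t)$ on $\clB_{r/2}$. Evaluating at a fixed $\xi_0\in\bdy B_{r/2}$ yields $m_\rho\simle\ut(\xi_0)(\tau-t)$, i.e.\ an upper bound on the \emph{minimum} over $\bdy B_\rho$; the Harnack inequality applied along a chain of balls covering the sphere $\bdy B_\rho$ (legitimate since $\ut\ge0$ in the punctured ball) then gives $\max_{\bdy B_\rho}\ut\simle m_\rho$, which is the claimed estimate. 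If you want to salvage your route, you would essentially have to prove the upper singularity classification first, and the shortest known way to do that is the min--max--Harnack argument just described; so you should restructure the proof around it rather than around a singular-solution majorant.
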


\begin{proof}
Define $\ut$ as in~\eqref{eq-def-ut}.
As in the proof of Lemma~\ref{lem-max-Gt},
$\ut$ is a continuous weak solution of $\Div\B(\xi,\grad\ut(\xi))=0$
in $B_r\setm\{0\}$, where $r=e^{-\ka t}$.
For $0<\rho<\frac{1}{2} r$, let $\vt$ be the potential of $\clB_\rho$ in $B_r$,
i.e.\ the continuous weak solution of $\Div\B(\xi,\grad\vt)=0$ in 
$B_r\setm \clB_\rho$ 
with boundary values $1$ on $\clB_\rho$ and $0$ on $\bdy B_r$.  
Then by Heinonen--Kilpel\"ainen--Martio~\cite[Lemma~6.21]{HKM}, 
there exists $c>0$ such that
\[
\vt(\xi)\ge c\biggl(\frac{\cp_{p,\wt}(B_\rho,B_r)}{\cp_{p,\wt}(B_{r/2},B_r)}\biggr)^{1/(p-1)}
\quad\text{for all }\xi\in \clB_{r/2}.
\]
From \cite[Lemma~7.6]{BM} and \cite[Theorem~2.18]{HKM}, we thus have for all 
$\xi\in \clB_{r/2}$ that 
\begin{equation} \label{eq-vt-potent}
\vt(\xi)\simge \cp_{p,\wt}(B_\rho,B_r)^{1/(p-1)}\simge
\Bigl(\log \frac{r}{\rho}\Bigr)^{-1}=\frac{1}{\ka(\tau-t)},
\end{equation}
where $\tau=-\frac{1}{\ka}\log\rho>t$ and the constants in $\simge$ depend only on $c$, $n$ and $p$.

Let $m_\rho$ be the minimum  of $\ut$ on the sphere $\bdy B_\rho$.
Using the boundary values of $\vt$ and $\ut$ on both $\bdy B_\rho$ and $\bdy B_r$, 
it follows from the comparison principle \cite[Lemma~3.18]{HKM} 
and from \eqref{eq-vt-potent} that
\begin{equation*}  
\ut\ge m_\rho\vt \simge \frac{m_\rho}{\tau-t}  
\quad\text{in }  \clB_{r/2}\setm\clB_\rho.
\end{equation*}
The Harnack inequality for $\ut$ on the sphere $\bdy B_\rho$ then gives for any 
fixed  $\xi_0\in\bdy B_{r/2}$ that
\[
\max_{x_n=\tau} u(x) =
\max_{\bdy B_\rho}\ut(x) \simle m_\rho \simle \ut(\xi_0)(\tau-t),
\]
where the constants in $\simle$ depend only on $\A$, $\ka$, $n$ and $p$.
\end{proof}

We are now ready to conclude the paper with the following trichotomy
for the solutions of the mixed problem at $\infty$, when $F$ is
negligible near $\infty$.

\begin{thm} \label{thm-cases}
Assume that for some $t\ge0$, the set $E:=F\cap G_t$ is such that $C_p(E)=0$.
Let $u\in\Wploc(\clG\setm F)$ be a weak continuous solution of $\Div\A(x,\grad u)=0$ in $G\setm F$ with zero conormal derivative on $\bdy G\setm F$.
Assume that $u$ is bounded in the set $(G_t\setm G_\tau) \setm F$
for every $\tau>t$.

Then there exist constants $\tau_0>t$, $M$, $M_0$ and $A_0\ge A\ge0$,
such that exactly one of the following holds.
\begin{enumerate}
\renewcommand{\theenumi}{\textup{(\roman{enumi})}}%
\renewcommand{\labelenumi}{\theenumi}
\item \label{it-i} The solution $u$ is bounded in $G_{\tau_0}$ and
the limit\/
\[
\lim_{G\setm F\ni x\to\infty} u(x)=:u(\infty)
\] 
exists and is finite. 
Moreover, for some $\al\in(0,1]$ and all $x\in G_{\tau_0}$,
\begin{equation} \label{eq-u-holder}
|u(x)-u(\infty)|\simle e^{-\ka\al x_n}.
\end{equation}
\item \label{it-ii} 
The solution $u$ tends roughly linearly either to $\infty$ or to
  $-\infty$, i.e.\ either
\begin{equation}   \label{eq-u-lin-to-infty}
M+A\tau \le u(x',\tau) \le M_0+A_0\tau 
\quad \text{for all $x'\in B'$ and $\tau\ge\tau_0$,}
\end{equation}
or \eqref{eq-u-lin-to-infty} holds for $-u$ in place of $u$.
\item \label{it-iv} 
The solution changes sign and approaches both $\infty$ and $-\infty$. 
More precisely, 
\[
\disp\max_{x_n=\tau}u(x)\ge M+A\tau \text{ and } 
\disp\min_{x_n=\tau}u(x)\le M_0-A\tau \quad
\text{for all $\tau\ge \tau_0$.}
\]
\end{enumerate}
\end{thm}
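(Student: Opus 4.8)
The plan is to use the transformation $T$ and the reflection from Section~\ref{sect-pro-ope-B} to convert the statement into a classification of the isolated singularity at $\xi=0$ for the weighted equation $\Div\B(\xi,\grad\ut)=0$, and then to read off the three cases from whether $\ut$ is bounded, one-sidedly bounded, or unbounded near the origin. First I would use the removability result: since $C_p(E)=0$ and $u$ is bounded in every $(G_t\setm G_\tau)\setm F$, Lemma~\ref{lem-remov} extends $u$ across $E=F\cap G_t$ to a continuous weak solution with zero conormal derivative on $\bdy G\setm(F\setm G_t)$, so after replacing $F$ by $F\setm G_t$ I may assume $F\cap G_t=\emptyset$. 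By Theorem~\ref{thm-G-TG} the reflected function $\ut$ of \eqref{eq-def-ut} is then a continuous weak solution of $\Div\B(\xi,\grad\ut)=0$ in the punctured ball $B_r\setm\{0\}$ with $r=e^{-\ka t}$, and the cross-section $\{x_n=\tau\}$ corresponds to the sphere $\bdy B_\rho$ with $\rho=e^{-\ka\tau}$. The three conclusions will correspond exactly to $\ut$ being (i) bounded, (ii) bounded on precisely one side, or (iii) unbounded on both sides as $\xi\to0$; these three scenarios are mutually exclusive and exhaustive, so exactly one occurs.

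For case \ref{it-i}, suppose $u$ is bounded near $\infty$, so $\ut$ is bounded near $0$. Since $\{0\}$ has zero $(p,\wt)$-capacity by \cite[Lemma~7.6]{BM}, the removability theorem \cite[Theorem~7.36]{HKM} extends $\ut$ to a solution in all of $B_r$; in particular $u(\infty):=\ut(0)=\lim_{x\to\infty}u(x)$ exists and is finite. The interior Hölder estimate \cite[Theorem~6.6]{HKM} then gives $|\ut(\xi)-\ut(0)|\simle|\xi|^\al$ for some $\al\in(0,1]$, and since $|\xi|=e^{-\ka x_n}$ this is exactly \eqref{eq-u-holder}.

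Now assume $u$ is unbounded near $\infty$. The equation is odd, since $\A(x,-q)=-\A(x,q)$ by \eqref{eq-homog-A}, so $-u$ is again a solution, and this symmetry lets me treat the two signs on equal footing. Suppose first $u$ is bounded on one side, say from below; subtracting a constant and enlarging $t$, I may assume $u\ge0$ in $G_t$ while $u$ is unbounded above. The upper bound $\max_{x_n=\tau}u\le A_0(\tau-t)$ is then immediate from Lemma~\ref{lem-est-with-pot}. For the matching lower bound I choose $N\ge\max_{x_n=t}u$ and apply Lemma~\ref{lem-max-Gt} to $u-N$, which has nonpositive values on $\{x_n=t\}$: the alternative $u-N\le0$ in $G_t$ is excluded by unboundedness above, so $\max_{x_n=\tau}u\ge N+A(\tau-t)$ with $A>0$. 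Since $u\ge0$, the Harnack inequality on each sphere $\bdy B_\rho$ (as in the proof of Lemma~\ref{lem-est-with-pot}) upgrades this to $\min_{x_n=\tau}u\simge(\tau-t)$, giving the lower bound in \eqref{eq-u-lin-to-infty}; the relation $A\le A_0$ follows from $\min\le\max$. If $u$ is bounded from above instead, the same argument applied to $-u$ yields \eqref{eq-u-lin-to-infty} for $-u$, so we land in case \ref{it-ii}. In case \ref{it-iv}, $u$ is unbounded on both sides near $\infty$; fixing any $t'>t$ and $N=\max_{x_n=t'}u<\infty$, the function $u-N$ has nonpositive values on $\{x_n=t'\}$ and is unbounded above, so Lemma~\ref{lem-max-Gt} forces $\max_{x_n=\tau}u\ge M+A\tau$, while applying the same to $-u$ gives $\min_{x_n=\tau}u\le M_0-A\tau$. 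Here no Harnack comparison is available nor needed, and $A$ is taken as the smaller of the two growth rates.

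The hard part will be case \ref{it-ii}: bridging from the linear lower bound for the maximum (which Lemma~\ref{lem-max-Gt} supplies only after shifting by $N$ to meet its sign hypothesis) to a linear lower bound for the \emph{minimum} over each cross-section, for which the Harnack inequality for the nonnegative solution $\ut$ is essential, and checking that the two one-sidedly bounded sub-cases are genuinely the same statement via the oddness of $\A$. A secondary technical point is verifying that the dichotomy underlying Lemma~\ref{lem-max-Gt}, namely \cite[Theorem~7.40]{HKM}, survives the constant shifts and that its ``removable'' alternative is ruled out precisely by the unboundedness hypothesis, so that the three boundedness scenarios match the three conclusions with no overlap.
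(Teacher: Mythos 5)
Your proposal is correct and follows essentially the same route as the paper: removability of $E$ via Lemma~\ref{lem-remov}, transfer to the punctured ball via Theorem~\ref{thm-G-TG}, the dichotomy of Lemma~\ref{lem-max-Gt} applied to suitable constant shifts of $\pm u$ to separate the three cases, the spherical Harnack inequality together with Lemma~\ref{lem-est-with-pot} for the two-sided linear bounds in case~\ref{it-ii}, and removability of the origin plus \cite[Theorems~6.6 and~7.36]{HKM} for the H\"older estimate in case~\ref{it-i}. The only difference is organizational (you trichotomize by boundedness first and then derive the estimates, while the paper combines the two dichotomies directly), which does not affect correctness.
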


\begin{proof}
By Lemma~\ref{lem-remov}, we can assume that $E$ is empty.
Define $\ut$ as in~\eqref{eq-def-ut}.
Then by Theorem~\ref{thm-G-TG}, the function $\ut$
is a continuous weak solution of $\Div\B(\xi, \grad\ut)=0$ in $B_r\setm\{0\}$,
where $r=e^{-\ka t}$. 
Let $r'\le\tfrac12r$. 
By the continuity of the solution $\ut$ in $B_r\setm\{0\}$,
there exist constants $m',M'$ such that  $m'\le \ut(\xi)\le M'$ 
for all $\xi\in\bdy B_{r'}$.
Hence, it follows from the definition of $\ut$ that $m'\le u(x)\le M'$
when $x_n=t':=-\frac{1}{\ka}\log r'>t$. 

Applying the second part of Lemma~\ref{lem-max-Gt} to $G_{t'}$ 
with $u$ replaced by $u-M'$ and $m'-u$,
respectively, shows that there exist $A'>0$ and $\tau_0>t'$ such that the following
two statements hold:
\begin{enumerate}
\renewcommand{\theenumi}{\textup{(\alph{enumi})}}%
\renewcommand{\labelenumi}{\theenumi}
\item \label{it-M'} 
$u\le M'$ in $G_{t'}$ or $\disp\max_{x_n=\tau}u(x)\ge M'+A'(\tau-t')$ for all $\tau\ge\tau_0$,
\item \label{it-m'} 
$u\ge m'$ in $G_{t'}$ or $\disp\min_{x_n=\tau}u(x)\le m'-A'(\tau-t')$ for all $\tau\ge\tau_0$.
\end{enumerate}
Combining the first two alternatives in \ref{it-M'} and \ref{it-m'} gives \ref{it-i},
while the second alternatives give \ref{it-iv}.
Since $C_{p,\wt}(\{0\})=0$, we can in the bounded case \ref{it-i}
use \cite[Theorem~7.36]{HKM}
and extend $\ut$ to $0$ so that it becomes a continuous weak solution 
of $\Div\B(\xi, \grad\ut)=0$ in $B_r$.
By \cite[Theorem~6.6]{HKM}, $\ut$ is H\"older continuous at the origin, 
which shows that \eqref{eq-u-holder} holds.

The remaining alternatives will lead to case~\ref{it-ii}.
If $u\ge m'$ in $G_{t'}$ and 
\[
\max_{x_n=\tau}u(x)\ge M'+A'(\tau-t')
\quad \text{for all $\tau\ge\tau_0$,}
\]
then using the Harnack inequality 
for $\ut-m'$ on the sphere $\bdy B_\rho$ with $\rho=e^{-\ka\tau}$, we see that for some
$C>0$ independent of~$\tau\ge\tau_0$,
\[
\min_{x_n=\tau}(u-m') \ge C \max_{x_n=\tau}(u-m') \ge C(M'-m'+A'(\tau-t')).
\]
This proves the lower bound in \eqref{eq-u-lin-to-infty},
while the upper bound follows 
from Lemma~\ref{lem-est-with-pot} applied to $u-m'$.
The second case of \ref{it-ii}, i.e.\ \eqref{eq-u-lin-to-infty}
  for $-u$, follows in a similar way by combining 
$u\le M'$ with the second alternative in \ref{it-m'}.
\end{proof}

We finish by giving a concrete example illustrating the cases in 
Theorem~\ref{thm-cases}.

\begin{example}
Let $G=(-1,1)\times(0,\infty)\subset\R^2$ and $F=[-1,1]\times\{0\}$.
The linear function $u(x_1,x_2)=ax_2+b$, where $a,b\in\R$,
satisfies $\Delta u=0$ in $G$ and $\bdy u/\bdy\nu=0$ on the lateral
boundary $\bdy G\setm F$.
The cases \ref{it-i} and \ref{it-ii}
follow if $a=0$, $a>0$ and $a<0$, respectively. 
Also, consider the function 
\[
u(x_1,x_2)= e^{\frac{\pi}{2} x_2}\sin\frac{\pi}{2} x_1,
\]
which is easily verified to satisfy $\Delta u=0$ in $G$.
Then \ref{it-iv} is achieved when $\sin\tfrac\pi2 x_1$ attains its  maximum
and minimum at $[-1,1]$, respectively.
\end{example}

\end{document}